\theoremstyle{plain}
\newtheorem{theorem}{Theorem}
\newtheorem{corollary}[theorem]{Corollary}
\newtheorem{lemma}{Lemma}
\theoremstyle{definition}
\newtheorem{example}[theorem]{Example}
\newtheorem*{remark*}{Remark}
\renewcommand{\Pr}{\mathbf P}
\newcommand{\E}{\mathbf E}
\newcommand{\ind}[1]{{\rm \bf1}\{#1\}}
\newcommand{\ee}{\varepsilon}
\newcommand{\ff}{\varphi}
\newcommand{\kk}{\varkappa}
\newcommand{\uu}{\underline}
\begin{document}
\title[First-passage times for bounded increments]
{First-passage times for random walks
in the triangular array setting }
\author[Denisov]{Denis Denisov}
\address{Department of Mathematics, University of Manchester, Oxford Road, Manchester M13 9PL, UK}
\email{denis.denisov@manchester.ac.uk}

\author[Sakhanenko]{Alexander Sakhanenko}
\address{Sobolev Institute of Mathematics, 630090 Novosibirsk, Russia}
\email{aisakh@mail.ru}

\author[Wachtel]{Vitali Wachtel}
\address{Institut f\"ur Mathematik, Universit\"at Augsburg, 86135 Augsburg, Germany}
\email{vitali.wachtel@math.uni-augsburg.de}

\begin{abstract}
In this paper we continue our study of exit times for random walks with independent but not necessarily identically distributed increments. 
Our paper ``First-passage times for random walks with non-identically distributed increments'' (2018) was devoted to the case when the random walk is constructed by a fixed sequence of independent random variables which satisfies the classical Lindeberg condition. 
Now we consider a more general situation when we have a triangular array of independent random variables.
Our main assumption is that the entries of every row are uniformly bounded by a deterministic sequence, 
which tends  to zero as the number of the row increases.  
\end{abstract}

% \version\vspace{1cm}

\keywords{Random walk, triangular array, first-passage time, central limit theorem, moving boundary, transition phenomena}
\subjclass{Primary 60G50; Secondary 60G40, 60F17}
\thanks{\rm 
A.S. and V.W. were supporteed by RFBR and DFG according to the research project № 20-51-12007}
\maketitle
{\scriptsize
%TCIMACRO{\TeXButton{toc}{\tableofcontents}}%
%BeginExpansion
%\tableofcontents
%EndExpansion
}

%%%%%%%%%%%%%%%%%%%%%%%%%%%%%%%%%%%%%%%%%%%%%%%%%%%%%%%%%%%%%%%%%%%%%%%%%%%%%%%%%%%%%%%%%%%%%%%%%%%%%%%%%%
\section{Introduction and the main result.}
\subsection{Introduction}
Suppose that for each $n=1,2,\dots $ we are given independent random variables 
$X_{1,n},\dots,X_{n,n}$ such that 
\begin{gather}                                                                               \label{i1}
\E X_{i,n}=0
\quad\text{for all }i\le n
\qquad\text{and}\qquad
\sum_{i=1}^{n}\E X_{i,n}^2=1.
\end{gather}
For each $n$ we consider a random walk
\begin{gather}                                                                               \label{i2}
S_{k,n}:=X_{1,n}+\dots+X_{k,n},\quad k=1,2,\dots,n.
\end{gather}
%For each $n=1,2,,\ldots$ let 
Let
$\{g_{k,n}\}_{k=1}^n$   be  deterministic real 
%$-valued sequence. For each $n=1,2,\ldots$ let 
numbers, and let
\begin{gather}                                                                               \label{i3}
T_n:=\inf\{k\geq1:S_{k,n}\leq g_{k,n}\}
\end{gather}
be the first crossing over the moving boundary $\{g_{k,n}\}$ by the random walk $\{S_{k,n}\}$.
The main purpose of the present paper is to study the asymptotic behaviour,
as $n\to\infty$, of the probability
\begin{gather}
\label{i4}
\Pr(T_n>n)=\Pr\left(\min_{1\le k\le n}(S_{k,n}-g_{k,n})>0\right).
\end{gather}

We shall always assume that the boundary $\{g_{k,n}\}$ is of a small magnitude, that is,
\begin{gather}                                                                                          \label{i6}
g_n^*:=\max_{1\le k\le n}|g_{k,n}|\to0.
\end{gather}
Here and in what follows, all unspecified limits are taken with respect to
$n\to\infty$.

Furthermore, to avoid trivialities, we shall assume that
\begin{gather}                                                                                        \label{i7}
\mathbf{P}(T_n>n)>0\qquad\text{for all }\quad n\ge 1.
\end{gather}

An important  particular case of the triangular array scheme is given by the following construction. Let $X_1,X_2,\dots $ be independent random variables with finite variances such that
\begin{gather}                                                                               \label{i11}
\E X_{i}=0\quad \text{ for all }i\ge1
\qquad\text{and}\qquad
B_n^2:=\sum_{i=1}^{n}\E X_{i}^2\to\infty.
\end{gather}
For  a real deterministic sequence $\{g_1,g_2,\dots\}$ set
\begin{gather}                                                                               \label{i13}
T:=\inf\{k\geq1:S_{k}\leq g_{k}\},
\quad\text{where}\quad
S_{k}:=X_{1}+\dots+X_{k}.
\end{gather}
Stopping time  $T$ is the first crossing over the moving boundary $\{g_{k}\}$ by the random walk~$\{S_{k}\}$.
Clearly,  \eqref{i11} --  \eqref{i13} is a particular case of  \eqref{i1} --  \eqref{i3}. Indeed to obtain~\mbox{ \eqref{i1} --  \eqref{i3}}  it is sufficient to set
\begin{gather}                                                                                          \label{i15}
X_{k,n}=\frac{X_k}{B_n},\quad S_{k,n}=\frac{S_k}{B_n},\quad g_{k,n}=\frac{g_k}{B_n}.
\end{gather}
However, the triangular array scheme is much more general than \eqref{i11} --  \eqref{i15}. 

If the classical Lindeberg condition holds for the sequence $\{X_k\}$
and $g_n=o(B_n)$ then, according to Theorem 1 in \cite{DSW16},
\begin{equation}                                                                                          \label{i17}
\Pr(T>n)\sim \sqrt{\frac{2}{\pi}}\frac{U(B_n^2)}{B_n}, 
\end{equation}
where $U$ is a positive slowly varying function with the values
$$
U(B_n^2)=\E[S_n-g_n;T>n],\quad n\ge1.
$$

The constant \(\sqrt{\frac{2}{\pi}}\) in front of the asymptotics 
has been inherited from the tail asymptotics of exit time of standard Brownian motion.  
Indeed, let $W(t)$ be the standard Brownian motion and set
$$
\tau_x^{bm}:=\inf\{t>0:x+W(t)\le0\},\quad x>0.
$$
Then, 
$$
\Pr(\tau_x^{bm}>t)=
\Pr(|W(t)|\le x)
=\Pr\left(|W(1)|\le \frac x{\sqrt t}\right)
\sim \sqrt{\frac{2}{\pi}}\frac{x}{\sqrt{t}},
\quad \text{as}\quad \frac{x}{\sqrt{t}}\to 0.
$$
The continuity of paths of $W(t)$ implies that $x+W(\tau_x^{bm})=0$.
Combining this with the optional stopping theorem, we obtain
\begin{align*}
x=\E[x+W(\tau_x^{bm}\wedge t)]&=\E[x+W(t);\tau_x^{bm}>t)]+
\E[x+W(\tau_x^{bm});\tau_x^{bm}\le t)]\\
&=\E[x+W(t);\tau_x^{bm}>t)].
\end{align*}
Therefore, for any fixed $x>0$, 
$$
\Pr(\tau_x^{bm}>t)
\sim\sqrt{\frac{2}{\pi}}\frac{x}{\sqrt{t}}= \sqrt{\frac{2}{\pi}}\frac{\E[x+W(t);\tau_x^{bm}>t)]}{\sqrt{t}},\quad \text{as}\quad  t\to\infty.
$$
Thus, the right hand sides here and in \eqref{i17} are of the same type.

\subsection{Main result}
The purpose of the present note is to generalise the asymptotic relation \eqref{i17} to the triangular array setting. 
More precisely, we are going to  show that the following relation holds
\begin{gather}                                                                            \label{i20}
\Pr(T_n>n)\sim\sqrt{\frac{2}{\pi}}E_n,
%\quad\text{with}\quad
\end{gather}
where
\begin{gather}                                                                            \label{i20+}
E_n:=\E[S_{n,n}-g_{n,n};T_n>n]
=\E[-S_{T_n,n};T_n\le n]-g_{n,n}\Pr(T_n>n).
\end{gather}

Unexpectedly  for the authors, in contrast to the described above case of a single 
sequence, the Lindeberg condition is not sufficient for the validity of \eqref{i20}, 
see Example~\ref{Lind}. 
Thus, one has to find a more restrictive condition for \eqref{i20} to hold. 
In this paper we show that \eqref{i20} holds under the following assumption: 
there exists a sequence $r_n$ such that
\begin{gather}                                                                                          \label{i5}
\max_{1\le i\le n}|X_{i,n}|\le r_n\to 0.
\end{gather}
It is clear that under this  assumption the triangular array satisfies 
the Lindeberg condition and, hence, the Central Limit Theorem holds.

At  first glance, \eqref{i5} might look too restrictive. 
However we shall construct a triangular array, see Example~\ref{Lind2}, 
in which the assumption \eqref{i5} becomes necessary for \eqref{i20} to hold.
Now we  state our main result.

\begin{theorem}                                                                      \label{thm:main}
Assume that \eqref{i6} and  \eqref{i5} are valid. Then there exists an absolute constant $C_1$  such that
\begin{gather}                                                                            \label{i31}
\Pr(T_n>n)\ge\sqrt{\frac{2}{\pi}}E_n\big(1-C_1(r_n+g_n^*)^{2/3}\big).
\end{gather}
On the other hand, there exists an absolute constant $C_2$ such that
\begin{gather}                                                                            \label{i32}
\Pr(T_n>n)\le\sqrt{\frac{2}{\pi}}E_n\big(1+C_2(r_n+g_n^*)^{2/3}\big),
\qquad\text{if}\quad
r_n+g_n^*\le1/24.
\end{gather}
In addition,  for $m\le n$,
\begin{gather}                                                                            \label{i33}
\Pr(T_n>m)\le\frac{4E_n}{B_m^{(n)}}
\end{gather}
provided that
$$
B_m^{(n)}:=\left(\sum_{k=1}^m\E X_{k,n}^2\right)^{1/2}\ge24(r_n+g_n^*).
$$
\end{theorem}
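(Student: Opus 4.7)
The plan is to build all three bounds from a common template: a martingale identity recasting $E_n$ as an overshoot quantity, a decomposition at an intermediate time $m$, and a strong approximation of the post-$m$ walk by Brownian motion. The starting point is the identity \eqref{i20+} itself, which follows from $\E S_{n,n}=0$ and optional stopping of the martingale $(S_{k,n})$ at $T_n\wedge n$. Combined with $|X_{i,n}|\le r_n$ and $S_{T_n-1,n}>-g_n^*$, the overshoot satisfies $-S_{T_n,n}\in[-g_n^*,\,g_n^*+r_n]$, so $E_n$ is itself of order $r_n+g_n^*$. The analogous identity at time $m$ converts $\E[S_{m,n}-g_{m,n};T_n>m]$ into $E_n$ modulo a remainder of the same order, giving a way to pass from $E_n$ to the survival quantity at any intermediate time.

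For the crude bound \eqref{i33}, I would establish the equivalent lower bound $E_n\ge (B_m^{(n)}/4)\Pr(T_n>m)$. The post-$m$ increments form an independent mean-zero walk of variance $(B_{m,n}^*)^2:=1-(B_m^{(n)})^2$. Conditioning on $\mathcal{F}_m$ on $\{T_n>m\}$, the goal is a universal lower bound on the conditional probability of the joint event that the walk avoids the boundary on $(m,n]$ and that $S_{n,n}-g_{n,n}\ge cB_m^{(n)}$. A Kolmogorov maximal inequality together with a second-moment argument supplies this, while the assumption $B_m^{(n)}\ge 24(r_n+g_n^*)$ guarantees sufficient residual variance to dominate both the boundary size and the jump sizes. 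Bounding $E_n$ from below by $cB_m^{(n)}$ times the probability of this good event gives \eqref{i33} after tracking universal constants.

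The sharp inequalities \eqref{i31}--\eqref{i32} are then proved by choosing $m=m_n$ so that $(B_{m,n}^*)^2$ is of order $(r_n+g_n^*)^{4/3}$, the scale that balances the Sakhanenko-type strong approximation error against the Brownian small-$x$ expansion. Conditionally on $\mathcal{F}_m\cap\{T_n>m\}$, couple the post-$m$ walk with a Brownian motion $W$ of matching variance. The conditional survival probability is then within relative error $O((r_n+g_n^*)^{2/3})$ of the Brownian exit probability $\sqrt{2/\pi}\,(S_{m,n}-g_{m,n})/B_{m,n}^*$. Taking expectation over $\mathcal{F}_m$ and rewriting $\E[S_{m,n}-g_{m,n};T_n>m]$ in terms of $E_n$ via the martingale identity yields \eqref{i31}; for \eqref{i32} one first uses \eqref{i33} (applied at time $m$) to discard atypical paths on which either the coupling or the small-$x$ expansion degrades, and then applies the matching upper Brownian estimate.

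The main obstacle is the quantitative interplay between the strong approximation error and the Brownian exit expansion near zero. The exponent $2/3$ reflects the optimal choice of $m$: too large an $m$ leaves too little residual variance for BM to smooth out the boundary, while too small an $m$ makes the coupling error comparable to $E_n$ itself. Pinning the final error in the form $(r_n+g_n^*)^{2/3}$ requires matching the coupling scale $r_n\log(1/r_n)$ against the residual scale $B_{m,n}^*$, and this delicate balancing is where the bulk of the technical work will lie.
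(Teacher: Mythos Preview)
Your decomposition at a fixed time $m$ misses the device that actually drives the paper's proof: an additional stopping time $\nu(h):=\inf\{k\ge1:Z_k\ge h\}$, so that the walk is frozen at $\nu(h)\wedge m$ rather than at $m$. The point is that $\widehat Z_{\nu(h)\wedge m}\le h+r_n$ deterministically, and this is precisely what makes the linearisation $\Psi(a)\approx\sqrt{2/\pi}\,a$ have relative error $O(a^2)=O((h+r_n)^2)$. With only a deterministic $m$, the value $Z_m$ is uncontrolled from above on $\{T>m\}$ and this step breaks. Your choice of $m$ is also inverted: you propose residual variance $(B_{m,n}^*)^2\asymp\rho^{4/3}$, i.e.\ $m$ near $n$; but then $Z_m$ on survival is typically of order one, so $Z_m/B_{m,n}^*\asymp\rho^{-2/3}\to\infty$ and the Brownian exit probability is near $1$, not $\sqrt{2/\pi}\,Z_m/B_{m,n}^*$. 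The paper does the opposite: it takes $B_m\asymp\rho^{1/3}$ and $h=\rho^{1/3}$, so that $B_{m,n}\approx1$ while $\widehat Z_{\nu(h)\wedge m}/B_{\nu(h)\wedge m,n}\lesssim\rho^{1/3}$; the cubic Taylor remainder of $\Psi$ then contributes $O(\rho^{2/3})$, and balancing this against the Arak error $C_0\rho/B_{m,n}$ and the martingale drift $\kk\rho$ is what produces the exponent $2/3$.

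The paper also does not use strong approximation. It applies Arak's Berry--Esseen type bound for the running minimum (Lemma~\ref{Arak}) to compare the post-stopping survival probability $Q_{\nu(h)\wedge m,n}(y)$ directly with $\Psi(y/B_{\nu(h)\wedge m,n})$, with additive error $C_0\rho/B_{m,n}$; there is no coupling and no scale $r_n\log(1/r_n)$ anywhere in the argument. For \eqref{i33} the route is much shorter than your post-$m$ conditioning: one combines the correlation inequality $\E Z_m^+\,\Pr(T>m)\le\E\widehat Z_m$ (Lemma~25 of \cite{DSW16}) with the Berry--Esseen lower bound $\E Z_m^+\ge\tfrac38 B_m-(r_n+g_n^*)$ of Lemma~\ref{L4} and the comparison $\E\widehat Z_m\le\tfrac43 E_n$ from Lemma~\ref{L5}.
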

\begin{corollary}
\label{cor:asymp}
Under  conditions \eqref{i6}, \eqref{i7} and \eqref{i5} relation \eqref{i20} takes place.
%of Theorem~\ref{thm:main},
\end{corollary}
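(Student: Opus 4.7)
The plan is to deduce the asymptotic equivalence \eqref{i20} directly from the two-sided bounds provided by Theorem~\ref{thm:main}. Beyond a rescaling, the only content is to check that the positivity assumption \eqref{i7} prevents the denominator $E_n$ from vanishing, so that the quantitative bounds \eqref{i31} and \eqref{i32} may be divided by $\sqrt{2/\pi}\,E_n$ to produce an asymptotic statement.

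First, I would verify $E_n>0$. On the event $\{T_n>n\}$ one has $S_{k,n}>g_{k,n}$ for every $k\le n$; in particular $S_{n,n}-g_{n,n}>0$ on this event. Hence $E_n$ is the expectation of a strictly positive random variable on a set of positive probability (by \eqref{i7}), so $E_n>0$ for all $n$.

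Second, because $g_n^*\to 0$ by \eqref{i6} and $r_n\to 0$ by \eqref{i5}, the quantity $\delta_n:=(r_n+g_n^*)^{2/3}$ tends to zero. In particular $r_n+g_n^*\le 1/24$ for every $n$ large enough, so both bounds \eqref{i31} and \eqref{i32} apply simultaneously. Combining them and dividing by $\sqrt{2/\pi}\,E_n>0$ yields
$$
1-C_1\delta_n\le\frac{\Pr(T_n>n)}{\sqrt{2/\pi}\,E_n}\le 1+C_2\delta_n,
$$
and letting $n\to\infty$ gives \eqref{i20}.

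No substantive obstacle arises: the corollary is purely the asymptotic restatement of Theorem~\ref{thm:main} once the elementary positivity of $E_n$ under \eqref{i7} has been recorded. All the genuine work (both the lower bound \eqref{i31} and the more delicate upper bound \eqref{i32}) has been packed into the theorem itself.
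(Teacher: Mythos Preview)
Your proof is correct and matches the paper's intended approach: the paper does not even write out a separate proof of Corollary~\ref{cor:asymp}, treating it as an immediate consequence of the two-sided bounds \eqref{i31}--\eqref{i32} in Theorem~\ref{thm:main}. Your explicit verification that $E_n>0$ under \eqref{i7} is the only detail needed to pass from those quantitative bounds to the asymptotic statement, and you have handled it correctly.
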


Estimates \eqref{i31} and \eqref{i32} can be seen as an improved version of \eqref{i20}, 
with a rate of convergence. 
Moreover, the fact, that the dependence on $r_n$ and $g_n$ is expressed in a quite explicit way, 
is very important for our work  
\cite{DSW_prog} in progress, where we analyse unbounded random variables. 
In this paper we consider first-passage times of walks $S_n=X_1+X_2+\ldots+X_n$ for which the central limit theorem is valid but the Lindeberg condition may fail. We use Theorem~\ref{thm:main} to analyse the behaviour of triangular arrays obtained from $\{X_n\}$ by truncation.

\subsection{Triangular arrays of weighted random variables.}
Theorem~\ref{thm:main} and Corollary~\ref{cor:asymp} can be used in studying first-passage times of weighted sums of independent random variables. 

Suppose that we are given independent random variables 
$X_{1},X_2,\dots$ such that 
\begin{gather}                                                                           \label{ex1}
\E X_{i}=0
\quad\text{and}\quad
\Pr(|X_{i}|\le M_i)=1
\quad\text{for all }i\ge1,
\end{gather}
where $M_1,M_2,\dots$ are deterministic.
For each $n$ we consider a random walk
\begin{gather}                                                                            \label{ex2}
U_{k,n}:=u_{1,n}X_1+\dots+u_{k,n}X_k,\quad k=1,2,\dots,n,
\end{gather}
and let 
\begin{gather}                                                                               \label{ex3}
\tau_n:=\inf\{k\geq1:U_{k,n}\leq G_{k,n}\}
\end{gather}
be the first crossing over the moving boundary $\{G_{k,n}\}$ by the random walk $\{U_{k,n}\}$.
The main purpose of the present example is to study the asymptotic behaviour,
as $n\to\infty$, of the probability
\begin{gather}                                                                         \label{ex4}
\Pr(\tau_n>n)=\Pr\left(\min_{1\le k\le n}(U_{k,n}-G_{k,n})>0\right).
\end{gather}

We suppose that 
$\{u_{k,n}, G_{k,n}\}_{k=1}^n$ are deterministic real numbers such that
\begin{gather}                                                                            \label{ex6}
M:=\sup_{k,n\ge1}\left(|u_{k,n}|M_k+|G_{k,n}|\right)<\infty
\end{gather}
and
\begin{gather}                                                                            \label{ex7}
\sigma_n^2:=\sum_{k=1}^n u_{k,n}^2\E X_k^2\to\infty.
\end{gather}
Moreover, we assume that 
\begin{gather}                                                                            \label{ex5}
u_{k,n}\to u_k\quad\text{and}\quad G_{n,k}\to g_k
\quad\text{for every }k\ge1.
\end{gather}
\begin{corollary}                                                             	\label{cor:ex0}
Assume that the distribution functions of all $X_k$ are continuous. Then, under assumptions \eqref{ex1}, \eqref{ex6},
\eqref{ex7} and \eqref{ex5},
\begin{equation}                                                                        \label{ex8}
\sigma_n\Pr(\tau_n>n)\to\sqrt{\frac{2}{\pi}}
\E[-U_\tau]\in[0,\infty),
\end{equation}
where
\begin{gather}                                                                            \label{ex9}
U_{k}:=u_{1}X_1+\dots+u_{k}X_k
%,\quad k=1,2,\dots,n,
\quad\text{and}\quad
\tau:=\inf\{k\geq1:U_{k}\leq g_k\}.
\end{gather}
\end{corollary}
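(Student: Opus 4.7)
The plan is to reduce to Theorem~\ref{thm:main} via rescaling, and then identify the limit of the resulting stopped expectation. Set
$$X_{k,n}:=\frac{u_{k,n}X_k}{\sigma_n},\qquad g_{k,n}:=\frac{G_{k,n}}{\sigma_n},\qquad S_{k,n}:=\frac{U_{k,n}}{\sigma_n};$$
then $\E X_{k,n}=0$, $\sum_{k=1}^n\E X_{k,n}^2=1$, $T_n=\tau_n$, and \eqref{ex6} gives $|X_{k,n}|\le M/\sigma_n$ and $g_n^*\le M/\sigma_n$, so \eqref{i5} and \eqref{i6} hold with $r_n,g_n^*\to 0$. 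Continuity of the distributions of the $X_k$ ensures \eqref{i7}. Corollary~\ref{cor:asymp} combined with \eqref{i20+} then yields
$$\sigma_n\Pr(\tau_n>n)\sim\sqrt{2/\pi}\bigl(\E[-U_{\tau_n,n};\tau_n\le n]-G_{n,n}\Pr(\tau_n>n)\bigr).$$
Because $\sigma_n\Pr(\tau_n>n)$ is bounded while $\sigma_n\to\infty$, $\Pr(\tau_n>n)\to 0$ and the $G_{n,n}$-term vanishes; it therefore suffices to establish $\E[-U_{\tau_n,n};\tau_n\le n]\to\E[-U_\tau]$.

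A key uniform bound is $|U_{\tau_n,n}|\le 2M$ on $\{\tau_n\le n\}$: by definition $U_{\tau_n-1,n}>G_{\tau_n-1,n}\ge -M$, the increment $|u_{\tau_n,n}X_{\tau_n}|\le M$ by \eqref{ex6}, so $U_{\tau_n,n}\ge -2M$; moreover $U_{\tau_n,n}\le G_{\tau_n,n}\le M$. The same argument gives $|U_\tau|\le 2M$ on $\{\tau<\infty\}$. For each fixed $N$, \eqref{ex5} together with $|X_k|\le M_k$ yields $U_{k,n}\to U_k$ and $G_{k,n}\to g_k$ a.s.\ for every $k\le N$; continuity of the distributions of the $X_k$ renders $\Pr(U_j=g_j)=0$ whenever some $u_i$, $i\le j$, is nonzero, so $\ind{\tau_n=k}\to\ind{\tau=k}$ a.s. Bounded convergence therefore gives
$$\E[-U_{\tau_n,n};\tau_n\le N]\longrightarrow\E[-U_\tau;\tau\le N],$$
and the right-hand side tends to $\E[-U_\tau]$ as $N\to\infty$ by dominated convergence.

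The main obstacle is controlling $|\E[-U_{\tau_n,n};N<\tau_n\le n]|\le 2M\,\Pr(\tau_n>N)$ uniformly in $n$. Specialising \eqref{i33} to our array gives, for $n$ sufficiently large,
$$\Pr(\tau_n>N)\le\frac{4E_n}{B_N^{(n)}}\le\frac{12M}{\bigl(\sum_{k=1}^N u_{k,n}^2\E X_k^2\bigr)^{1/2}}.$$
In the generic case $\sum_k u_k^2\E X_k^2=\infty$, \eqref{ex5} makes the right-hand side uniformly small by choosing $N$ and then $n$ large. The delicate scenario is $\sum_k u_k^2\E X_k^2<\infty$, where the variance of the walk is concentrated at indices close to $n$; there one applies \eqref{i33} to the shifted increments $\{u_{k,n}X_k\}_{k>N}$, whose total variance still tends to infinity, to recover the same bound. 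Combining all estimates yields \eqref{ex8}; nonnegativity $\E[-U_\tau]\ge 0$ follows automatically from $\Pr(\tau_n>n)\ge 0$, and finiteness from the uniform bound $|U_\tau|\le 2M$.
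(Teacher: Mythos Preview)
Your reduction to the triangular array, the application of Corollary~\ref{cor:asymp}, the optional-stopping identity, and the convergence $\E[-U_{\tau_n,n};\tau_n\le N]\to\E[-U_\tau;\tau\le N]$ for each fixed $N$ all match the paper's proof essentially line for line. The only substantive difference is how the tail $|\E[U_{\tau_n,n};N<\tau_n\le n]|\le 2M\,\Pr(\tau_n>N)$ is handled, and here your argument has a real gap.

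The paper never invokes \eqref{i33} for this step. Since $\Pr(\tau_n>N)\to\Pr(\tau>N)$ for every fixed $N$ (which you yourself derive from the a.s.\ convergence of indicators), one has directly
\[
\limsup_{n\to\infty}\bigl|\E[U_{\tau_n,n};N<\tau_n\le n]\bigr|\le 2M\,\Pr(\tau>N),
\]
and the paper then lets $N\to\infty$, using that $\tau<\infty$ almost surely. No uniform-in-$n$ control of $\Pr(\tau_n>N)$ is needed.

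Your alternative via \eqref{i33} breaks down in the case $\sum_k u_k^2\E X_k^2<\infty$. The event $\{\tau_n>N\}$ is measurable with respect to $X_1,\dots,X_N$ alone, so ``applying \eqref{i33} to the shifted increments $\{u_{k,n}X_k\}_{k>N}$'' cannot bound $\Pr(\tau_n>N)$: those increments play no role in that probability. Concretely, take $u_{k,n}=\ind{k\ge n/2}$, $G_{k,n}\equiv-1$, and i.i.d.\ bounded continuous $X_k$; then \eqref{ex1}, \eqref{ex5}--\eqref{ex7} all hold, yet $U_{k,n}=0>-1$ for every $k<n/2$, so $\Pr(\tau_n>N)=1$ whenever $n>2N$ and no uniform bound tending to $0$ can exist. (In this example $u_k\equiv0$, $g_k\equiv-1$, and $\tau=\infty$ a.s., so the almost-sure finiteness of $\tau$ that the paper invokes is itself not automatic from the stated hypotheses---but that is the paper's issue, not yours.) Replace your \eqref{i33} argument by the two-line $\limsup$ computation above.
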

	
It follows from condition \eqref{ex5} that random walks $\{U_{k,n}\}$ introduced in 
\eqref{ex2} may be considered as perturbations of the walk $\{U_{k}\}$ defined in \eqref{ex9}. Thus, we see from  \eqref{ex8} that the influence of perturbations on the behavior of the probability $\Pr(\tau_n>n)$ is concentrated in the $\sigma_n$.

\begin{example}
\label{ex:Gaposhkin}
As an example we consider the following method of summation, which has been suggested by Gaposhkin~\cite{Gaposhkin}.
% Let $\{X_k\}$ be independent, identically distributed random variables. 
Let $f:[0,1]\mapsto\mathbb{R}^+$ be a non-degenerate continuous function. For random variables $\{X_k\}$ define
$$
U_k(n,f):=\sum_{j=1}^k f\left(\frac{j}{n}\right)X_j,
\quad j=1,2,\ldots,n.
$$
This sequence can be seen as a stochastic integral of $f$ with respect to the random walk $S_k=X_1+X_2+\ldots X_k$ normalized by $n$.

We assume that the random variables $\{X_k\}$ are independent and identically distributed. Furthermore, we assume that $X_1$ satisfies \eqref{ex1} and that its distribution function is continuous. In this case
$$
\sigma^2_n(f):=\frac1{n}\E X_1^2
\sum_{j=1}^n f^2\left(\frac{j}{n}\right)\to \sigma^2(f):=\E X_1^2\int_0^1f(t)dt>0.
$$
From Corollary~\ref{cor:ex0} with $u_{k,n}:=f\left(\frac{j}{n}\right)\to f(0)=:u_k$, $G_{k,n}\equiv0$ and $\sigma_n:=\sqrt{n}\sigma_n(f)$
we immediately obtain
\begin{equation}                                                                        \label{ex2.2}
\sqrt{n}\Pr\left(\min_{k\le n}U_k(n,f)>0\right)
\to \sqrt{\frac{2}{\pi}}\frac{f(0)}{\sigma(f)}
\E[-S_\tau]\in[0,\infty),
\end{equation}
where
\begin{gather}                                                                            \label{ex16}
S_{k}:=X_1+\dots+X_k
%,\quad k=1,2,\dots,n,
\quad\text{and}\quad
\tau:=\inf\{k\geq1:S_{k}\leq 0\}.
\end{gather}
\hfill$\diamond$
\end{example}

Clearly, \eqref{ex2.2} gives one exact asymptotics only when $f(0)>0$. The case $f(0)=0$ seems to be much more delicate. If $f(0)=0$ then one needs an information on the behaviour of $f$ near zero. If, for example, $f(t)=t^\alpha$ with some $\alpha>0$ then, according to Example 12 in \cite{DSW16}, 
$$
\Pr\left(\min_{k\le n}U_k(n,f)>0\right)
=\Pr\left(\min_{k\le n}\sum_{j=1}^k j^\alpha X_j>0\right)
\sim\frac{Const}{n^{\alpha+1/2}}.
$$

Now we give an example of application of our results to study of transition phenomena.

\begin{example}                                                           \label{ex:regression}
Consider an autoregressive sequence 
\begin{align}                                                                           \label{ex11}
&U_{n}(\gamma)=\gamma U_{n-1}(\gamma)+X_n,\ n\ge0,
\quad n=1,2,\dots,
\quad\text{where}\quad
U_0(\gamma)=0,
\end{align}
with  a non-random  $\gamma=\gamma_n\in(0,1)$ and with independent, identically distributed innovations
$X_1,X_2,\dots$. 
As in the previous example, we assume that $X_1$ satisfies \eqref{ex1} and that its distribution function is continuous.
Consider the exit time
$$
T(\gamma):=\inf\{n\ge1: U_n(\gamma)\le0\}.
$$
We want to understand the behavior of the probability
$\Pr(T(\gamma)>n)$ in the case when $\gamma=\gamma_n$ depends on $n$ and
\begin{gather}                                                                            \label{ex13}
\gamma_n\in(0,1)\quad\text{and}\quad
\sup_n n(1-\gamma_n)<\infty.
\end{gather}
%We also assume that variables $\{X_n\}$ are bounded, have zero mean and their distribution functions are continuous.

We now show that the autoregressive sequence $U_n(\gamma)$ can be transformed to a random walk, which satisfies the  conditions of Corollary~\ref{cor:ex0}.
First, multiplying  \eqref{ex11} by $\gamma^{-n}$, we get 
$$
U_n(\gamma)\gamma^{-n}=U_n(\gamma)\gamma^{-(n-1)}+X_n\gamma^{-n}=
\sum_{k=1}^n\gamma^{-k}X_k,\quad n\ge1.
$$
Thus, for each $n\ge1$, 
\begin{align}                                                                                 \label{ex18}
\{T(\gamma_n)>n\}
%&=\{Z_1>0,Z_2>0,\ldots,Z_n>0\}\\
%&=\{\rho^{-1}Z_1>0,\rho^{-2}Z_2>0,\ldots, \rho^{-n}Z_n>0\}\\
=\left\{\sum_{j=1}^k \gamma_n^{-j}X_j>0\ \ \text{ for all }\ k\le n\right\}.
\end{align}
Comparing \eqref{ex18} with \eqref{ex2} and \eqref{ex4},
we see that we have a particular case of the model in Corollary~\ref{cor:ex0} with $u_{k,n}=\gamma_n^{-k}$ and $G_{k,n}=0$.  Clearly, $u_{k,n}\to1$ for every fixed $k$.
Furthermore, we infer from \eqref{ex13} that
$$                                                                                 
\gamma_n^{-n}=e^{-n\log \gamma_n}=e^{O(n|\gamma_n-1|)}=e^{O(1)}
$$
and
$$
\sigma_n^2(\gamma_n):=\frac{\gamma_n^{-2n}-1}{1-\gamma_n^2}
=\gamma_n^{-2}+\gamma_n^{-4}+\dots+\gamma_n^{-2n}
=ne^{O(1)}.
$$
These relations imply that \eqref{ex5} and \eqref{ex6} are fulfilled. Applying Corollary \ref{cor:ex0}, we arrive at 
\begin{equation}                                                                        \label{ex15}
\sigma_n(\gamma_n)\Pr(T(\gamma_n)>n)\to \sqrt{\frac{2}{\pi\E X_1^2}}
\E[-S_\tau]\in(0,\infty),
\end{equation}
where $\tau$ is defined in \eqref{ex16}.
\hfill$\diamond$
\end{example}

\subsection{Discussion of the assumption \eqref{i5}}
Based on the validity of CLT and considerations in~\cite{DSW16} one can expect that the Lindeberg condition will again be sufficient. 
However the following example shows that this  is not the case and the situation is more complicated.
%%%%%%%%%%%%%%%%%%%%%%%%%%%%%%%%%%%%%%%%%%%%%%%%%%%%%%%%%%%%%%%%%%%%%%%%%%%%%%%%%%%%%%%%%%%%%%%%%%%%%%%%%%%%%%%%%%%%%%%%%%%%%%%%%%%%%%%%%%%%%%%%%%%%%%%%%%%%%%%%%%%%%%%%
\begin{example}                                                                \label{Lind}
Let $X_2,X_3,\ldots$ and $Y_2,Y_3,\ldots$ be mutually independent random variables such that
\begin{gather}                                                                            \label{i21-}
\E X_k=\E Y_k=0,\  \E X_k^2=\E Y_k^2=1\quad\text{and}\quad
\Pr(|X_k|\le M)=1\ \text{for all }k\ge2
\end{gather}
for some finite constant $M$. 
It is easy to see that the triangular array
\begin{gather}                                                                    \label{i24+}
X_{1,n}:=\frac{Y_n}{\sqrt{n}},\ 
X_{k,n}:=\frac{X_k}{\sqrt{n}},\ k=2,3,\ldots,n;\ n>1
\end{gather}
satisfies the Lindeberg condition. Indeed, $\sum_{i=1}^n\E X_{i,n}^2=1$ and 
for every $\ee>\frac M{\sqrt{n}}$ one has
\begin{gather}                                                                    \label{i26}
\sum_{i=1}^n\E [X_{i,n}^2;|X_{i,n}|>\ee]=\E [X_{1,n}^2;|X_{1,n}|>\ee]
\le \E X_{1,n}^2=\frac {\E Y_{n}^2}n=\frac1n\to0
\end{gather}
due to the fact that $|X_{k,n}|\le\frac M{\sqrt{n}}$ for all $k\ge2$. 

We shall also assume that $g_{k,n}\equiv0$. 
For each $n>1$ let random variables $Y_n$ be defined as follows 
	\begin{gather}                                                                          \label{i25}
Y_n:=
\begin{cases}
N_n,\! \!&\text{with probability}\  p_n:=\frac1{2N_n^2},\\
0,\!\! & \text{with probability}\   1-2p_n,\\
- N_n ,\!\! & \text{with probability}\  p_n,
\end{cases}
\end{gather}
where $N_n\ge1$. Note that $\E Y_n=0$ and $\E Y_n^2=1$.

For every $n>1$ we set
\begin{gather}                                                                            \label{i21}
U_{n}:=X_2+X_3+\ldots+X_{n} %,\quad k\ge1,
\quad\text{and}\quad
\uu U_{n}:=\min_{2\le i\le n}U_i.
\end{gather}
It is easy to see that
$$
\{T_n>n\}=\left\{Y_n=N_n\right\}\cap
\left\{\uu U_{n}>-N_n\right\}.
$$
Noting now that $\uu U_{n}\ge -(n-1)M$, we infer that 
\begin{gather}                                                                    \label{i27-}
\{T_n>n\}=\{Y_n=N_n\},\quad\text{for any }N_n>(n-1)M.
\end{gather}
In this case we have
\begin{align}                                                                                                                   \label{i28-}
\nonumber
E_n&=\E[S_{n,n};T_n>n]
=\E\left[\frac{Y_n+U_{n}}{\sqrt{n}};Y_n=N_n\right]\\
\nonumber
&=\Pr(Y_n=N_n)\E\left[\frac{N_n+U_{n}}{\sqrt{n}}\right]                                                          
=\Pr(Y_n=n)\frac{N_n+\E U_{n}}{\sqrt{n}}\\
&=\Pr(Y_n=n)\frac{N_n}{\sqrt{n}}.
\end{align}
In particular, from \eqref{i27-} and \eqref{i28-} we conclude that
\begin{align*}                                                                        
\Pr(T_n>n)=\Pr(Y_n=n)=\frac{E_n\sqrt{n}}{N_n}<\frac{E_n\sqrt{n}}{M(n-1)}=o(E_n)
\end{align*}
provided that $N_n>(n-1)M$. 

This example shows that \eqref{i20} can not hold for all triangular arrays satisfying the Lindeberg condition. 
\hfill$\diamond$
\end{example}
We now construct an array, for which the assumption \eqref{i5} becomes necessary
for the validity of \eqref{i20}.
\begin{example}                                                                \label{Lind2}
We consider again the model from the previous example and assume additionally
that the variables $X_2,X_3,\dots$ have the Rademacher distribution, that is,
$$
\Pr(X_k=\pm1)=\frac{1}{2}.
$$
Finally, in order to have random walks on lattices, we shall assume that $N_n$ is a natural number. 

It is then clear that $r_n:=\frac{N_n}{\sqrt{n}}$ is the minimal deterministic number such that
$$
\max_{k\le n}|X_{k,n}|\le r_n.
$$

As in Example~\ref{Lind}, we shall assume that $g_{k,n}\equiv0$.

In order to calculate $E_n$ we note that
\begin{align*}                                                                                                                                              %\label{i28}
E_n=\E[S_{n,n};T_n>n]
&=\Pr\left(X_{1,n}=r_n\right)
\E\left[r_n+\frac{U_{n}}{\sqrt{n}};r_n+\frac{\uu U_{n}}{\sqrt{n}}>0\right]\\
&=\Pr\left(X_{1,n}=r_n\right)\frac{1}{\sqrt{n}}
\E\left[N_n+U_{n};N_n+\uu U_{n}>0\right].
\end{align*}
It is well known that for $m\ge1$ the sequence $(N+U_m){\rm 1}_{\{N+\underline{U}_m>0\}}$ is a martingale with $U_1=\underline{U}_1=0$. This implies that
$$
\E[N+U_m;N+\underline{U}_m>0]=N\quad\text{for all}\quad m,N\ge1.
$$
Consequently,
\begin{equation}
\label{i23}
E_n=p_n\frac{N_n}{\sqrt{n}}=p_n r_n.
\end{equation}

Furthermore,
\begin{align*}
\Pr(T_n>n)=\Pr\left(X_{1,n}=r_n\right)
\Pr\left(\frac{N_n}{\sqrt{n}}+\frac{\uu U_{n}}{\sqrt{n}}>0\right)
=p_n\Pr(N_n+\uu U_{n}>0).
\end{align*}
Using the reflection principle for the symmetric simple random walk, one can show that
\begin{equation}
\label{i24-}
\Pr\left(N+\uu U_m>0\right)=\Pr(-N<U_m\le N)
\quad\text{for all}\quad m,N\ge1.
\end{equation}
Consequently, $\Pr(T_n>n)=p_n\Pr(-N_n<U_{n}\le N_n)$. Combining this equality with \eqref{i23}, we obtain
\begin{equation}
\label{i27}
\frac{\Pr(T_n>n)}{E_n}=
\frac{1}{r_n}\Pr\left(-r_n<\frac{U_{n}}{\sqrt{n}}\le r_n\right).
\end{equation}

Using the central limit theorem, one obtains
\begin{align}                                                                            \label{i24}
\Pr\left(-r_n<\frac{U_{n}}{\sqrt{n}}\le r_n\right)
\sim\Psi\left(r_n\right),
\end{align}
where
\begin{gather}                                                                            \label{a3}
\varphi(u):=\frac{1}{\sqrt{2\pi}}e^{-u^2/2}
\qquad\text{and}\qquad 
\Psi(x):=2\int_0^{x^+}\varphi(u)du.
\end{gather}

We will postpone the proof of \eqref{i24-} and \eqref{i24} till the end of the paper.
Assuming that \eqref{i24-} and \eqref{i24} are true, as a result we have
$$
\frac{\Pr(T_n>n)}{E_n}\sim\frac{\Psi\left(r_n\right)}{r_n}.
$$
Noting now that $\frac{\Psi(a)}{a}<2\varphi(0)=\sqrt{\frac{2}{\pi}}$ for every $a>0$, we conclude that the assumption $r_n\to0$ is necessary and sufficient for the validity of \eqref{i20}.
More precisely,
\begin{itemize}
 \item $\Pr(T_n>n)\sim\sqrt{\frac{2}{\pi}}E_n$ iff $r_n\to0$;
 \item $\Pr(T_n>n)\sim\frac{\Psi(a)}{a}E_n$ iff $r_n\to a>0$;
 \item $\Pr(T_n>n)=o(E_n)$ iff $r_n\to\infty$.
\end{itemize}

\hfill$\diamond$
\end{example}

%\newpage
%%%%%%%%%%%%%%%%%%%%%%%%%%%%%%%%%%%%%%%%%%%%%%%%%%%%%%%%%%%%%%%%%%%%%%%%%%%%%%%%%
\section{Proofs.}
In this section we are going to obtain estimates, which are valid for each fixed $n$. 
For that reason we will sometimes omit the subscript $n$ and introduce the following simplified notation: 
\begin{gather}                                                                            \label{a0}
T:=T_n,\quad 
X_k:=X_{k,n},\quad S_k:=S_{k,n},\quad g_k:=g_{k,n},\ \ 1\le k< n
\end{gather}
and
\begin{gather}                                                                       \label{a1}
\rho:=r_n+g_n^*,\quad B_k^2:= \sum_{i=1}^k\E X_{i}^2,\quad 
B_{k,n}^2:=B_n^2-B_k^2=1-B_k^2,\ \ 1\le k< n.
\end{gather}
\subsection{Some estimates in the central limit theorem} %Proof  of Lemma \ref{Arak}}
For every integer~\mbox{$1\le k\le n$} and every real $y$ define
\begin{gather}                                                                                        \label{a2}
Z_k:=S_k-g_k,\ \widehat{Z}_k:=Z_k{\bf 1}\{T>k\}
\ \text{and}\ 
Q_{k,n}(y):=
\mathbf{P}\Big(y+\min_{k \le j\leq n}(Z_j-Z_k)>0\Big).
\end{gather}

\begin{lemma}                                                                                                          \label{Arak}
For all $y\in R$ and for all $0\leq k<n$ with $B_{k,n}>0$
\begin{equation}                                                                      \label{a4}
\left|Q_{k,n}(y)-\Psi\Big(\frac{y}{B_{k,n}}\Big)\right|
\le \frac{C_0\rho}{B_{k,n}}\ind{y>0}, %\quad a_{k,n}:=\frac{C_A r_n+4\ff(0)g_n^*}{B_{k,n}},
\end{equation}
where $C_0$ is an absolute constant.
\end{lemma}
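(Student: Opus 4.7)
The plan is a reflection-principle decomposition combined with a quantitative CLT, carried out as follows.

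\emph{The trivial case $y\le 0$.} At $j=k$ we have $Z_k-Z_k=0$, so the minimum in the definition of $Q_{k,n}(y)$ is $\le 0$, forcing $Q_{k,n}(y)=0$. Simultaneously $\Psi(y/B_{k,n})=0$, and the factor $\ind{y>0}$ on the right of \eqref{a4} makes the bound trivial here.

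\emph{The main case $y>0$.} Introduce the first crossing
$$
\sigma:=\inf\{j:k\le j\le n,\ y+Z_j-Z_k\le 0\},
$$
so that $Q_{k,n}(y)=\Pr(\sigma>n)$ and
$$
Q_{k,n}(y)=\Pr(Z_n-Z_k>-y)-\Pr(\sigma\le n,\ Z_n-Z_k>-y).
$$
On $\{\sigma=j\}$ the overshoot $v:=-(y+Z_j-Z_k)$ lies in $[0,\rho]$, since $|X_j|\le r_n$ and $|g_j-g_{j-1}|\le 2g_n^*$. By the strong Markov property at $\sigma$, conditional on $\mathcal F_\sigma$ the residual $Z_n-Z_\sigma$ is a centered sum of independent variables bounded by $r_n$ with variance $B_{\sigma,n}^2$ and third absolute moment at most $r_n B_{\sigma,n}^2$, so a Berry--Esseen bound combined with the symmetry of the Gaussian yields
$$
\Pr\bigl(Z_n-Z_\sigma>v+g_n-g_\sigma\mid\mathcal F_\sigma\bigr)=\Pr\bigl(Z_n-Z_\sigma<-v+g_n-g_\sigma\mid\mathcal F_\sigma\bigr)+O\!\left(\tfrac{\rho}{B_{\sigma,n}}\right).
$$
Integrating over $\sigma$ and using the inclusion $\{Z_n-Z_k<-y\}\subseteq\{\sigma\le n\}$, the decomposition collapses to
$$
Q_{k,n}(y)=\Pr(Z_n-Z_k>-y)-\Pr(Z_n-Z_k<-y)+R,
$$
for some remainder $R$. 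A final application of Berry--Esseen to $Z_n-Z_k$ (variance $B_{k,n}^2$, third moment $\le r_n B_{k,n}^2$, shift by $|g_n-g_k|\le 2g_n^*$) identifies $\Pr(Z_n-Z_k>-y)-\Pr(Z_n-Z_k<-y)$ with $2\Phi(y/B_{k,n})-1=\Psi(y/B_{k,n})$ modulo an error $O(\rho/B_{k,n})$, and the lemma follows.

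\emph{Main obstacle.} The delicate point is showing that the remainder $R$ is uniformly $O(\rho/B_{k,n})$: the Berry--Esseen error $\rho/B_{\sigma,n}$ in the conditional step blows up whenever $\sigma$ is close to $n$. The fix is to split the sum over $\sigma$ according to the size of $B_{\sigma,n}$. For $B_{\sigma,n}$ comparable to $B_{k,n}$ the estimate is directly usable, while for small $B_{\sigma,n}$ one must bound $\Pr(\sigma\in[j_0,n])$ separately --- for instance through a concentration (Arak--Rogozin) inequality applied to $Z_n-Z_k$ --- and verify that such tail contributions are themselves of order $\rho/B_{k,n}$.
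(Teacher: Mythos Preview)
Your reflection approach is different from the paper's and, while the skeleton is correct, it leaves a real gap at precisely the point you yourself flag.

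The paper does not re-derive anything: it introduces $q_{k,n}(y):=\Pr\bigl(y+\min_{k\le j\le n}(S_j-S_k)>0\bigr)$ and invokes Corollary~1 in Arak~\cite{Arak75} as a black box for
\[
\Bigl|q_{k,n}(y)-\Psi\Bigl(\tfrac{y}{B_{k,n}}\Bigr)\Bigr|
\le \frac{C_A}{B_{k,n}}\max_{k<j\le n}\frac{\E|X_j|^3}{\E X_j^2}
\le \frac{C_A r_n}{B_{k,n}}.
\]
The passage from $q_{k,n}$ to $Q_{k,n}$ is then the trivial sandwich $q_{k,n}(y-2g_n^*)\le Q_{k,n}(y)\le q_{k,n}(y+2g_n^*)$ together with the Lipschitz bound $|\Psi(a)-\Psi(b)|\le 2\varphi(0)|a-b|$, which contributes $4\varphi(0)g_n^*/B_{k,n}$ and yields $C_0=C_A+4\varphi(0)$.

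Your reflection plus Berry--Esseen scheme is essentially the method by which Arak proved the cited corollary, so rather than quoting his result you are reproving it. The decomposition and the overshoot control are fine (a minor slip: the overshoot satisfies $v\le r_n+2g_n^*$, not $\rho$, but this only affects constants). However, the ``main obstacle'' is exactly the substantive part of Arak's argument, and your proposed fix does not close it. A concentration bound on $Z_n-Z_k$ does not directly control $\Pr(\sigma\in[j_0,n])$: on that event one only knows that $Z_\sigma-Z_k$ lies in an interval of length $O(\rho)$, while $Z_n-Z_\sigma$ still has variance up to $B_{j_0,n}^2$, so $Z_n-Z_k$ is concentrated at scale $B_{j_0,n}$, not $\rho$. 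A single split at $B_{j_0,n}\asymp B_{k,n}$ therefore gives only $\Pr(\sigma\ge j_0)=O(1)$, and a naive dyadic decomposition over scales $B_{j_\ell,n}=2^{-\ell}B_{k,n}$ produces an extra factor $\log(B_{k,n}/\rho)$ in the remainder. Removing that logarithm is precisely what Arak's iterative argument accomplishes; a one-line pointer to ``Arak--Rogozin'' does not substitute for it.

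So either carry the late-crossing estimate through in full---which amounts to reproducing Arak's proof---or take the paper's route and cite him.
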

\begin{proof}
For non-random real $y$ define
\begin{gather}                                                                        \label{a5}
q_{k,n}(y):=
\mathbf{P}\Big(y+\min_{k \le j\leq n}(S_j-S_k)>0\Big),\quad n>k\ge1.
\end{gather}
It follows from Corollary 1 in Arak~\cite{Arak75}
that there exists an absolute constant $C_A$ such that
\begin{equation}                                                                    \label{a6}
\left|q_{k,n}(y)-\Psi\Big(\frac{y}{B_{k,n}}\Big)\right|
\le\frac{C_A}{B_{k,n}}\max_{k<j\le n}\frac{\mathbf{E}|X_j|^3}{\mathbf{E}X_j^2}\le \frac{C_A r_n}{B_{k,n}},
\end{equation}
where maximum is taken over all $j$ satisfying $\mathbf{E}X_j^2>0$. In the second step  we have used the inequality 
$\E|X_j|^3\le r_n\E X_j^2$ which follows from 
\eqref{i5}.

We have from (\ref{a2}) that $|Z_k- S_k|=|g_k|\le g_n^*$.
Hence, for $Q_{k,n}$ and $q_{k,n}$ defined in \eqref{a2} and \eqref{a5}, we have
\begin{equation}                                                                                                    \label{a7}
q_{k,n}(y_-)\le Q_{k,n}(y)\le q_{k,n}(y_+),\quad\text{where}\quad y_\pm:=y\pm2g_n^*.
\end{equation}
Then we obtain  from~\eqref{a6} that  
\begin{equation}                                                                                                          \label{a8}
\left|q_{k,n}(y_\pm)-\Psi\Big(\frac{y_\pm}{B_{k,n}}\Big)\right|
\le \frac{C_A r_n}{B_{k,n}}.
\end{equation}

On the other hand,
it is easy to see from \eqref{a3} that
$$
\Big| \Psi\Big(\frac{y_\pm}{B_{k,n}}\Big)-\Psi\Big(\frac{y}{B_{k,n}}\Big)\Big|
\le\frac{2\ff(0)|y_\pm-y|}{B_{k,n}}=\frac{4\ff(0)g_n^*}{B_{k,n}}.
$$
Applying this inequality together with~\eqref{a7} and~\eqref{a8} we immediately obtain~\eqref{a4} for $y>0$
$C_0:=C_A+4\ff(0)$. 
For $y\le 0$ inequality~\eqref{a4} immediately follows since 
$Q_{k,n}(y)=0=\Psi(y)$.
\end{proof}

\begin{lemma}                                                                                    \label{L4}
If $1\le m\le n$, then %and $u\ge v_k$ we have
\begin{gather}                                                                                        \label{m11}
\E S_m^+\ge\frac{3}{8}B_m-r_n.
\end{gather}
Moreover, for all $m$ satisfying $B_m\ge24(r_n+g_n^*)$ we have 
\begin{gather}                                                                                        \label{m12}
\Pr(T>m)\le3\frac{\E \widehat{Z}_m}{B_m}.
\end{gather}
\end{lemma}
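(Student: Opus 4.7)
The two assertions are intertwined: the assumption $B_m\ge 24\rho$ in \eqref{m12} is exactly what makes \eqref{m11} yield $\E S_m^+\ge B_m/3$, since $(3/8)B_m-r_n\ge B_m/3$ is equivalent to $B_m\ge 24 r_n$.

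\emph{Inequality \eqref{m11}.} I would apply the classical Berry-Esseen inequality. The bound $|X_i|\le r_n$ gives $\E|X_i|^3\le r_n\E X_i^2$, so Lyapunov's fraction is at most $r_n/B_m$ and
\[
\sup_{t\in\mathbb R}\bigl|\Pr(S_m/B_m>t)-(1-\Phi(t))\bigr|\le C_{\mathrm{BE}}\,\frac{r_n}{B_m}.
\]
Writing $\E S_m^+=B_m\int_0^\infty\Pr(S_m/B_m>t)\,dt$ and truncating at $t=A$,
\[
\E S_m^+\ge B_m\!\int_0^A(1-\Phi(t))\,dt-AC_{\mathrm{BE}}r_n.
\]
With $A=2$ the integral is $\approx 0.39>3/8$, and any Berry-Esseen constant $C_{\mathrm{BE}}\le 1/2$ keeps the subtracted term $\le r_n$, proving \eqref{m11}. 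In the regime $B_m\le(8/3)r_n$ the right-hand side of \eqref{m11} is non-positive and there is nothing to prove.

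\emph{Inequality \eqref{m12}.} Split at the level $a=B_m/3$:
\[
\Pr(T>m)=\Pr(T>m,\,Z_m>a)+\Pr(T>m,\,0<Z_m\le a).
\]
Markov's inequality bounds the first summand by $\E\widehat{Z}_m/a=3\E\widehat{Z}_m/B_m$, which already matches the claim. It remains to show that $\Pr(T>m,\,0<Z_m\le B_m/3)$ is absorbed by the slack. I would handle this by choosing an intermediate time $j$ with $B_j^2$ of order $B_m^2/2$ and conditioning on $\mathcal{F}_j$ on the event $\{T>j\}$: a two-barrier refinement of Lemma~\ref{Arak} (combining its ``stays positive'' bound on $(j,m]$ with a Berry-Esseen estimate for the endpoint) yields approximately
\[
\Pr\bigl(\min_{j<k\le m}Z_k>0,\,Z_m\le a\,\big|\,\mathcal{F}_j\bigr)\lesssim\frac{Z_j\,a^2}{(B_m^2-B_j^2)^{3/2}}+\frac{\rho}{\sqrt{B_m^2-B_j^2}}.
\]
Taking expectation and using $B_m\ge 24\rho$ to absorb the error terms, together with the optional-stopping identity $\E\widehat{Z}_k=\E[-S_T;T\le k]-g_k\Pr(T>k)$ that relates $\E\widehat{Z}_j$ to $\E\widehat{Z}_m$, finishes \eqref{m12}.

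\emph{Main obstacle.} The technical heart is the two-barrier estimate for $\Pr(T>m,\,0<Z_m\le a)$. Lemma~\ref{Arak} controls only the positivity constraint, so the endpoint localization $Z_m\le a$ forces a secondary application of the same type of estimate, and the resulting constants must be tracked carefully. The generous threshold $B_m\ge 24\rho$---markedly larger than the $B_m\gg r_n$ required for \eqref{m11}---is precisely what leaves room for everything to fit and for the constant $3$ in \eqref{m12} to survive.
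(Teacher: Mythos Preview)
Your argument for \eqref{m11} is essentially the paper's: Berry--Esseen plus truncation of the integral $\E S_m^+=\int_0^\infty\Pr(S_m>x)\,dx$ at a fixed multiple of $B_m$, with the constant chosen so that the truncated Gaussian contribution exceeds $3/8$.

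For \eqref{m12}, however, your route is both much harder than the paper's and contains a real gap. The paper does not split on the size of $Z_m$ at all; it invokes the single inequality
\[
\E Z_m^+\,\Pr(T>m)\le \E\widehat Z_m
\]
(Lemma~25 in \cite{DSW16}), which says that conditioning on $\{T>m\}$ raises the mean of $Z_m$ above $\E Z_m^+$. Combining this with $\E Z_m^+\ge \E S_m^+-g_n^*\ge\tfrac38 B_m-(r_n+g_n^*)\ge\tfrac13 B_m$ under $B_m\ge24\rho$ immediately gives \eqref{m12}. No conditioning at an intermediate time, no two-barrier estimate.

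The gap in your approach is the phrase ``absorbed by the slack.'' With $a=B_m/3$, Markov gives $\Pr(T>m,Z_m>a)\le\E\widehat Z_m/a=3\E\widehat Z_m/B_m$, which already \emph{equals} the claimed bound, so there is no slack. Moreover the residual $\Pr(T>m,\,0<Z_m\le B_m/3)$ is not of smaller order: for Brownian motion started at $x$, $\Pr(\tau_x^{bm}>t,\,W_t\le c\sqrt t)$ is a fixed positive fraction of $\Pr(\tau_x^{bm}>t)$ for any fixed $c>0$, and the same is true here. So the two-barrier bound you sketch, even if established, would only show the residual is $O(\E\widehat Z_m/B_m)$, not $o(\E\widehat Z_m/B_m)$; you cannot recover the constant~$3$ this way. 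The clean fix is precisely the correlation inequality the paper cites.
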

\begin{proof}
We will use the following extension of the Berry-Esseen inequality due to Tyurin~\cite{Tyurin2010}: 
\[
\sup_{x\in\mathbb{R}}|\Pr( S_m>x)-\Pr(B_m\eta>x)|
\le 0.5606\frac{\sum_{j=1}^m\E|X_j|^3}{B_m^3}
\le 0.5606\frac{r_n}{B_m},
\]
when $B_m>0$.
Here $\eta$ is a random variable that  follows the standard normal distribution. 
This inequality implies that, for every $C>0$,
\begin{align*}
\E S_m^+&=\int_0^\infty\Pr(S_m>x)dx\ge\int_0^{CB_m}\Pr(S_m>x)dx\\
&\ge\int_0^{CB_m}\left(\Pr(B_m\eta>x)-0.5606\frac{r_n}{B_m}\right)dx
=B_m\E(\eta^+\wedge C)-0.5606Cr_n.
\end{align*}
Further,
\begin{align*}
\E(\eta^+\wedge C) %&\ge\E[\eta;\eta\in(0,C)]
=\int_0^\infty(x\wedge C)\varphi(x)dx
&=\int_0^C x\frac{1}{2\pi}e^{-x^2/2}dx+C\int_C^\infty\ff(x)dx\\
&=\ff(0)-\ff(C)+C\int_C^\infty\ff(x)dx.
%\ge \ff(0)-\frac{\ff(C)}{C^2}.
%=\frac{1}{(2\pi)^{1/2}}\left(1-e^{-C^2/2}\right).
\end{align*}
Taking here $C=1/0.5606$ and using tables of the standard normal distribution we conclude that $\E(\eta^+\wedge C) >0.375>\frac{3}{8}$ and \eqref{m11} holds.

Next, according to Lemma 25 in \cite{DSW16},
\begin{gather}                                                                                        \label{m14}
\E Z^+_m\Pr(T>m)\le\E \widehat{Z}_m,\qquad 1\le m\le n.
\end{gather}
%where we are using simplified notation from (b10) and (a0).
Therefore, it remains to derive a lower bound for $\E Z^+_m$.
We first note that
\begin{gather*}                                                                                        \label{m14+}
S_m=Z_m+g_m\le Z_m^++g_m^+\le Z_m^++g_n^*.
\end{gather*}
Hence, $S_m^+\le Z_m^++g_n^*$ and, taking into account \eqref{m11}, we get
\begin{gather}                                                                                         \label{m15}
\E Z_m^+\ge \E S_m^+-g_n^*\ge\frac{3}{8}B_m-(r_n+g_n^*) .
\end{gather}
If $m$ is such that $\frac{B_m}{24}\ge r_n+g_n^*$, then we infer from \eqref{m14} and \eqref{m15} that
\begin{gather*}                                                                                  \label{m16}
\E \widehat{Z}_m\ge\E Z^+_m\Pr(T>m)\ge
\left(\frac{3}{8}B_m-(r_n+g_n^*)\right)\Pr(T>n)
\\
\ge
\left(\frac{3}{8}-\frac{1}{24}\right) B_m\Pr(T>m)
=\frac{1}{3} B_m\Pr(T>m).
\end{gather*}
Thus,  \eqref{m12} is proven.
\end{proof}

%\newpage
\subsection{Estimates for expectations of $\widehat{Z}_k$.}
\begin{lemma}
\label{L3}
Let $\alpha$ be a stopping time such that $1\le\alpha\le l\le n$ with probability one. Then
\begin{equation}                                                                           \label{m2}
\mathbf{E}\widehat{Z}_\alpha- \mathbf{E}\widehat{Z}_l
\le2g_n^* p(\alpha,l)
\quad\text{with}\quad
p(\alpha,l):=\mathbf{P}(\alpha<T,\alpha<l).
\end{equation}
Moreover,
\begin{align}                                                                           \label{m3}
\mathbf{E}\widehat{Z}_\alpha- \mathbf{E}\widehat{Z}_l
&\ge\E[X_T;\alpha<T\le l]-2g_n^*p(\alpha,l)
%\mathbf{P}(\alpha<T,\alpha<l)\\
\ge-(2g_n^*+r_n)p(\alpha,l).
%\mathbf{P}(\alpha<T,\alpha<l).
\end{align}

In addition, the  equality in (\ref{i20+}) takes place.
\end{lemma}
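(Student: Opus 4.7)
The plan is to decompose $\E[\widehat Z_\alpha]-\E[\widehat Z_l]$ by partitioning $\{T>\alpha\}$ according to whether $T>l$ or $\alpha<T\le l$, and then to collapse the result via two applications of optional stopping for the martingale $\{S_k\}$ — once at the bounded stopping time $\alpha$, once at $T\wedge l$ — so that the difference is expressed in terms of $S_T$ and $g$-differences only.

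Writing $\E[\widehat Z_\alpha]=\E[Z_\alpha;T>\alpha]$ as a sum over the disjoint pieces $\{\alpha=l,T>l\}$, $\{\alpha<l,T>l\}$, $\{\alpha<T\le l\}$, and $\E[\widehat Z_l]=\E[Z_l;T>l]$ as a sum over $\{\alpha=l,T>l\}\cup\{\alpha<l,T>l\}$, the $\{\alpha=l\}$ pieces cancel and one obtains
\[
\E[\widehat Z_\alpha]-\E[\widehat Z_l]=\E[Z_\alpha-Z_l;\,\alpha<l,T>l]+\E[Z_\alpha;\,\alpha<T\le l].
\]
Since $\{T>\alpha,\alpha<l\}\in\mathcal F_\alpha$ and $\alpha\le l$ is bounded, optional sampling applied to $S_k$ at $\alpha$ gives $\E[S_\alpha;\,T>\alpha,\alpha<l]=\E[S_l;\,T>\alpha,\alpha<l]$; decomposing the event as $\{\alpha<l,T>l\}\sqcup\{\alpha<T\le l\}$ and rearranging yields
\[
\E[S_\alpha-S_l;\,\alpha<l,T>l]=\E[S_l-S_\alpha;\,\alpha<T\le l].
\]
A second optional stopping, at $T$ on the event $\{\alpha<T\le l\}\in\mathcal F_T$, replaces $S_l$ by $S_T$ there. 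Substituting $Z_k=S_k-g_k$ and simplifying reduces the whole difference to
\[
\E[\widehat Z_\alpha]-\E[\widehat Z_l]=\E[S_T-g_\alpha;\,\alpha<T\le l]+\E[g_l-g_\alpha;\,\alpha<l,T>l].
\]

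From this identity the estimates are routine. For \eqref{m2} I would use $S_T\le g_T$ on $\{T\le n\}$ and $|g_k|\le g_n^*$ to bound each integrand above by $2g_n^*$; the two probabilities add up to $p(\alpha,l)$. For \eqref{m3} I would write $S_T=S_{T-1}+X_T\ge g_{T-1}+X_T$, using that $T$ is a first crossing and hence $S_{T-1}>g_{T-1}$; this gives $S_T-g_\alpha\ge X_T-2g_n^*$ and thus the first lower bound, after which $|X_T|\le r_n$ together with $\Pr(\alpha<T\le l)\le p(\alpha,l)$ supplies the second. The equality in \eqref{i20+} is a one-liner: optional stopping yields $0=\E S_{T\wedge n}=\E[S_n;T>n]+\E[S_T;T\le n]$, so $\E[S_n;T>n]=\E[-S_T;T\le n]$, and subtracting $g_n\Pr(T>n)$ from both sides delivers the claim.

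The main obstacle will be the measurability and disjoint-decomposition bookkeeping required to legitimately apply optional stopping twice — in particular, checking $\{T>\alpha,\alpha<l\}\in\mathcal F_\alpha$ and $\{\alpha<T\le l\}\in\mathcal F_T$ — and the recognition that the crossing condition $S_{T-1}>g_{T-1}$, combined with the boundedness $|X_T|\le r_n$, is precisely what produces the $(r_n+2g_n^*)$ factor in the second inequality of \eqref{m3}. Once the displayed identity is established, what remains is sign tracking and elementary bounds on $g_k$ and $X_k$.
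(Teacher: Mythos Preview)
Your argument is correct and follows essentially the same path as the paper: both derive the identity
\[
\E\widehat Z_\alpha-\E\widehat Z_l=\E[S_T-g_\alpha;\,\alpha<T\le l]+\E[g_l-g_\alpha;\,\alpha<l,T>l]
\]
via optional stopping and then bound each term using $g_{T-1}<S_{T-1}$, $S_T\le g_T$, $|g_k|\le g_n^*$ and $|X_T|\le r_n$. The only cosmetic difference is that the paper obtains this identity by subtracting two instances of an external formula (Lemma~20 in \cite{DSW16}, itself just optional stopping at $T\wedge\alpha$), whereas you reprove it self-containedly by two direct applications of optional sampling; the remaining estimates and the derivation of \eqref{i20+} coincide.
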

\begin{proof}
Define events
$$
A_1:=\{\alpha<T\le l\}
\quad\text{and}\quad
A_2:=\{\alpha<l<T\}.
$$
Then, clearly,
$
\{\alpha<T,\alpha<l\}=A_1\cup A_2.
$
Using Lemma 20 from \cite{DSW16}, we obtain
\begin{align}                                            \nonumber
\mathbf{E}\widehat{Z}_\alpha+\mathbf{E}[ S_T;T\le\alpha]&=
-\mathbf{E}[ g_\alpha;\alpha<T]\\
\nonumber
&=-\mathbf{E}[ g_\alpha;A_2]-\mathbf{E}[ g_l;\alpha=l<T]-\mathbf{E}[ g_\alpha;A_1],
\\                                                                                        \label{m6blue}
\quad \mathbf{E}\widehat{Z}_l+\mathbf{E}[ S_T;T\le l]&=
-\mathbf{E}[ g_l;T>l]=-\mathbf{E}[ g_l;A_2]-\mathbf{E}[ g_l;\alpha=l<T].
\end{align}
Thus,
\begin{gather}                                                                                          \label{m6}
\mathbf{E}\widehat{Z}_\alpha- \mathbf{E}\widehat{Z}_l
=\mathbf{E}[ S_T- g_\alpha;A_1]
+\mathbf{E}[ g_l- g_\alpha;A_2].
\end{gather}
Next, by the definition of $T$,
$$
g_T\ge S_T=S_{T-1}+X_T> g_{T-1}+X_T.
$$
Hence,
\begin{align*}
\mathbf{E}[ S_T- g_\alpha;A_1]
\le\mathbf{E}[ g_T- g_\alpha;A_1]
\le2g_n^*\mathbf{P}(A_1)
\end{align*}
and
\begin{align*}
\mathbf{E}[ S_T- g_\alpha;A_1]
&\ge\mathbf{E}[ g_{T-1}- g_\alpha+X_T;A_1]\\
&\ge
\mathbf{E}[ X_T;A_1]-2g_n^*\mathbf{P}(A_1)
\ge-(2g_n^*+r_n)\mathbf{P}(A_1).
\end{align*}
Furthermore,
\begin{gather*}                                                                                          \label{m7}
|\mathbf{E}[ g_n- g_\alpha;A_2]|\le
2g_n^*\mathbf{P}(A_2).
\end{gather*}
Plugging these estimates into \eqref{m6}, we arrive at desired bounds.
%The latter inequality and (\ref{m8}) yield (\ref{m2}) if only we estimate $A_n^*(u)$ by (\ref{a9}).

The  equality in (\ref{i20+}) follows from  (\ref{m6blue}) with $l=n$.
\end{proof}

%\newpage
For every $h>0$ define
\begin{gather}                                                                              \label{d1}
\nu(h):=\inf\{k\geq1:S_k\geq g_k+h\}=\inf\{k\geq1:Z_k\geq h\}. 
\end{gather}

\begin{lemma}                                                                                       \label{L5}
Suppose that 
 $m\le n$ is such that
the inequality \eqref{m12} takes place,
\begin{gather}                                                                                        \label{d2}
B_m\ge24g_n^*
\quad\text{and}\quad
h\ge6g_n^*.
\end{gather}
Then
\begin{align}                                                                                 \label{d3}
2\mathbf{E}\widehat{Z}_{\nu(h)\wedge m}
\le3\mathbf{E}\widehat{Z}_m\le4\mathbf{E}\widehat{Z}_n=4E_n,
\quad
\mathbf{P}(\widehat{Z}_{\nu(h)\wedge m}>0)\le \kk E_n,
\\                                                                                            \label{d4}
2\kk g_n^*E_n\ge\mathbf{E}\widehat{Z}_{\nu(h)\wedge m}-E_n
\ge \delta(h)-2\kk g_n^*E_n,
%\quad\text{where}\quad 
\end{align}
where
\begin{align}                                                                                            \label{d5}
0\ge \delta(h):=\E[X_T;n\ge T>\nu(h)\wedge m]\ge- \kk r_nE_n
\quad\text{and}\quad \kk:=\frac2h+\frac4{B_m}.
\end{align}
In particular,  \eqref{i33} takes place.
\end{lemma}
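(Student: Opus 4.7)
The plan is to bootstrap Lemma~\ref{L3} through the two-step reduction $\nu(h)\wedge m \to m \to n$, supplementing with the a priori exit-time estimate \eqref{m12}. First I would apply Lemma~\ref{L3} with $\alpha := \nu(h)\wedge m$ and $l := m$. On the event $\{\nu(h)<m,\,T>\nu(h)\}$ we have $\widehat Z_{\nu(h)\wedge m}=Z_{\nu(h)}\ge h$ by the definition of $\nu(h)$, so $\E\widehat Z_{\nu(h)\wedge m}\ge h\,p(\nu(h)\wedge m,m)$. Combining this with the upper bound in \eqref{m2} and the hypothesis $h\ge 6g_n^*$ yields $2\E\widehat Z_{\nu(h)\wedge m}\le 3\E\widehat Z_m$. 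Next, Lemma~\ref{L3} with $\alpha:=m$, $l:=n$ together with the assumed \eqref{m12} gives $\E\widehat Z_m(1-6g_n^*/B_m)\le E_n$, and the assumption $B_m\ge 24g_n^*$ then forces $3\E\widehat Z_m\le 4E_n$. Plugging this back into \eqref{m12} immediately yields \eqref{i33}.

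For the probability bound in \eqref{d3} I would split
\[
\{\widehat Z_{\nu(h)\wedge m}>0\}=\{\nu(h)\le m,\,T>\nu(h)\}\cup\{\nu(h)>m,\,T>m\},
\]
estimate the first piece by Markov's inequality (since $\widehat Z_{\nu(h)\wedge m}\ge h$ on it) to obtain $\le 2E_n/h$, and the second by \eqref{m12} and the previous step to obtain $\le 4E_n/B_m$; summing gives $\kappa E_n$. Then \eqref{d4} follows from a final application of Lemma~\ref{L3} with $\alpha:=\nu(h)\wedge m$, $l:=n$: the observation $p(\nu(h)\wedge m,n)\le\Pr(\widehat Z_{\nu(h)\wedge m}>0)\le\kappa E_n$ converts the upper bound \eqref{m2} into the upper half of \eqref{d4} and the lower bound \eqref{m3} into the lower half, with $\delta(h)$ emerging as the $\E[X_T;A_1]$ term from \eqref{m3}.

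The step I expect to be the main obstacle is the claim $\delta(h)\le 0$. Writing $\alpha:=\nu(h)\wedge m$, I would decompose
\[
\delta(h)=\sum_{k=1}^n\E\bigl[X_k;\,T=k,\ \alpha<k\bigr]=\sum_{k=1}^n\E\bigl[\mathbf{1}_{T>k-1,\,\alpha<k}\cdot X_k\mathbf{1}_{X_k\le g_k-S_{k-1}}\bigr].
\]
Since $\{T>k-1,\,\alpha<k\}\in\mathcal F_{k-1}$ and $X_k$ is independent of $\mathcal F_{k-1}$, conditioning reduces each summand to $\E[X_k\mathbf{1}_{X_k\le c}]$ for an $\mathcal F_{k-1}$-measurable $c$. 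Because $\E X_k=0$,
\[
\E[X_k\mathbf{1}_{X_k\le c}]=-\E[X_k\mathbf{1}_{X_k>c}]\le 0\qquad\text{for every real }c
\]
(the right-hand integrand is non-negative when $c\ge 0$, and the left-hand integrand is non-positive when $c<0$). Hence every summand is non-positive and $\delta(h)\le 0$. The matching lower bound $\delta(h)\ge -\kappa r_nE_n$ is then immediate from $|X_T|\le r_n$ combined with the probability estimate $\Pr(\widehat Z_{\nu(h)\wedge m}>0)\le\kappa E_n$ just established.
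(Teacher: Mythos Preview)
Your argument is correct and follows essentially the same two-step bootstrap $\nu(h)\wedge m\to m\to n$ via Lemma~\ref{L3} that the paper uses, with the same Markov/exit-time splitting for the probability bound. Your conditioning argument for $\delta(h)\le 0$ is a welcome addition: the paper's proof only invokes \eqref{i5} and the probability bound to justify \eqref{d5}, which accounts for the lower inequality but leaves the sign of $\delta(h)$ unaddressed.
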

\begin{proof}
First, we apply Lemma \ref{L3} with $l=m$ and
$\alpha=\nu(h)\wedge m$. For this choice of the stopping time one has
\begin{align*}
p(\nu(h)\wedge m,m)&=\Pr\left(\nu(h)\wedge m<T,\nu(h)\wedge m<m\right)\\
&\le\Pr(\widehat{Z}_{\nu(h)\wedge m}\ge h)
\le\frac{\E \widehat{Z}_{\nu(h)\wedge m}}{h}.
\end{align*}
Plugging this bound into \eqref{m2} and using the inequality $h\ge 6g_n^*$, we get 
\begin{align*}                                                                                 \mathbf{E}\widehat{Z}_{\nu(h)\wedge m}-\mathbf{E}\widehat{Z}_m
\le\frac{2g_n^*}{h}\mathbf{E}\widehat{Z}_{\nu(h)\wedge m}
\le\frac{\mathbf{E}\widehat{Z}_{\nu(h)\wedge m}}3
\end{align*}
and hence
\begin{align}
\label{d6}
\frac23\mathbf{E}\widehat{Z}_{\nu(h)\wedge m}\le\mathbf{E}\widehat{Z}_m.
\end{align}

Next, we apply Lemma \ref{L3} with $l=n$ and $\alpha=m$. In this case
\mbox{$p(m,n)=\Pr(T>m)$} and we may use \eqref{m12}.
Substituting these estimates into \eqref{m2} and using \eqref{d2}, we obtain
\begin{align*}                                                                                
\mathbf{E}\widehat{Z}_m-\mathbf{E}\widehat{Z}_n
\le 2g_n^* \Pr(T>m)
\le\frac{6g_n^*}{B_m}\mathbf{E}\widehat{Z}_m
\le\frac14\mathbf{E}\widehat{Z}_m.
\end{align*}
Therefore,
\begin{align}
\label{d7}
\frac34\mathbf{E}\widehat{Z}_m\le\mathbf{E}\widehat{Z}_n.
\end{align}
We conclude from  \eqref{d6} and \eqref{d7} that the first relation in \eqref{d3} takes place. In particular, from \eqref{m12} and \eqref{d7} we get that \eqref{i33} holds under assumptions of Lemma~\ref{L5}.

At last, we are going to apply Lemma \ref{L3} with $l=n>m$ and
$\alpha=\nu(h)\wedge m$. For this choice of the stopping time one has
\begin{align}                                                                         \label{d8}
\nonumber
p(\nu(h)\wedge m,n)&=\Pr\left(T>\nu(h)\wedge m\right)
=\Pr(\widehat{Z}_{\nu(h)\wedge m}>0)\\
\nonumber
&\le\Pr(\widehat{Z}_{\nu(h)\wedge m}\ge h)+\Pr\left(T> m\right)\\                                                            
&\le\frac{\E \widehat{Z}_{\nu(h)\wedge m}}{h}+\frac{3\E \widehat{Z}_m}{B_m}
\le\frac{2E_n}{h}+\frac{4E_n}{B_m}=\kk E_n.
\end{align}
Plugging this bound into \eqref{m2} and \eqref{m3}, we immediately obtain \eqref{d4}. 
The second inequality in \eqref{d3} also follows from \eqref{d8}; and using \eqref{i5} 
together with \eqref{d8} we find~\eqref{d5}.

Thus, all assertions of Lemma \ref{L5} are proved.
\end{proof}

%\newpage
\subsection{Proof of Theorem~\ref{thm:main}.}

According to the representation (36) in \cite{DSW16}, 
\begin{align}                                                                       \label{d11}
\nonumber
\mathbf{P}(T>n)
&=\mathbf{E}\left[Q_{\nu(h)\wedge m,n}(Z_{\nu(h)\wedge m});T>\nu(h)\wedge m\right]\\
&=\mathbf{E}Q_{\nu(h)\wedge m,n}(\widehat Z_{\nu(h)\wedge m}).
\end{align}

\begin{lemma}                                                                       \label{L2}
Suppose that all assumptions of Lemma \ref{L5} are fulfilled and  that \mbox{$B_{m,n}>0$.} Then one has
\begin{align}
 \label{d13}
 \nonumber
\left|{\mathbf{P}}(T>n)-\mathbf{E}\Psi\Big(
\frac{\widehat Z_{\nu(h)\wedge m}}
{B_{{\nu(h)\wedge m},n}}\Big)\right|
&\le \frac{C_0\rho}{B_{m,n}}\mathbf{P}(\widehat Z_{\nu(h)\wedge m}>0)\\
&\le2\ff(0)\frac{1.3C_0\kk\rho E_n}{B_{m,n}} .
\end{align}
In addition,
\begin{gather}                                         \label{d14}
\E\Psi\Big(\frac {\widehat Z_{\nu(h)\wedge m}}{B_{{\nu(h)\wedge m},n}}\Big)
\le \frac{2\ff(0) E_n(1+2\kk g_n^*)}{B_{m,n}},
\\                                                                                     \label{d15}
\E\Psi\Big(\frac {\widehat Z_{\nu(h)\wedge m}}{B_{{\nu(h)\wedge m},n}}\Big)
\ge 2\ff(0) E_n\Big(1-\frac{ (r_n+h)^2}{6}-2\kk g_n^*-\kk r_n\Big).
\end{gather}
\end{lemma}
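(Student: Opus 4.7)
The plan is to prove all three estimates by applying Lemma~\ref{Arak} pathwise inside the representation \eqref{d11}, and then controlling the resulting $\Psi$-terms using elementary Taylor-type bounds together with the moment and exit-probability estimates collected in Lemma~\ref{L5}. Throughout, the key monotonicity I rely on is $B_{\nu(h)\wedge m,n}\ge B_{m,n}$, which follows from $\nu(h)\wedge m\le m$ and the identity $B_{k,n}^2=1-B_k^2$.

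For \eqref{d13}, I start from \eqref{d11} and apply the Arak bound \eqref{a4} pathwise with $k=\nu(h)\wedge m$ and $y=\widehat Z_{\nu(h)\wedge m}$. This dominates $|Q_{\nu(h)\wedge m,n}(\widehat Z_{\nu(h)\wedge m})-\Psi(\widehat Z_{\nu(h)\wedge m}/B_{\nu(h)\wedge m,n})|$ by $(C_0\rho/B_{\nu(h)\wedge m,n})\ind{\widehat Z_{\nu(h)\wedge m}>0}\le (C_0\rho/B_{m,n})\ind{\widehat Z_{\nu(h)\wedge m}>0}$; taking expectations yields the first inequality. The second follows by combining $\Pr(\widehat Z_{\nu(h)\wedge m}>0)\le\kk E_n$ from \eqref{d8} with the numerical identity $1/(2\ff(0))=\sqrt{\pi/2}<1.3$.

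The two bounds on $\E\Psi(\cdot)$ come from sandwiching $\Psi$ between linear and Taylor-type approximations. Since $\ff(u)\le\ff(0)$ in \eqref{a3}, one has $\Psi(x)\le 2\ff(0) x^+$; combined with $B_{\nu(h)\wedge m,n}\ge B_{m,n}$ and the upper estimate $\E\widehat Z_{\nu(h)\wedge m}\le E_n(1+2\kk g_n^*)$ from \eqref{d4}, this gives \eqref{d14}. For \eqref{d15}, integrating $e^{-u^2/2}\ge 1-u^2/2$ on $[0,x]$ yields $\Psi(x)\ge 2\ff(0)\,x\,(1-x^2/6)$ for $x\ge 0$. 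Applied pathwise at $x=\widehat Z_{\nu(h)\wedge m}/B_{\nu(h)\wedge m,n}$ and then in expectation, the linear term is bounded below using $B_{\nu(h)\wedge m,n}\le B_n=1$ together with the lower estimate $\E\widehat Z_{\nu(h)\wedge m}\ge E_n(1-\kk r_n-2\kk g_n^*)$ from \eqref{d4}--\eqref{d5}. For the cubic correction, on $\{\widehat Z_{\nu(h)\wedge m}>0\}$ I use the pathwise bound $\widehat Z_{\nu(h)\wedge m}\le r_n+h$ (absorbing negligible $g_n^*$ contributions): on $\{\nu(h)\le m\}$ this follows from $Z_{\nu(h)-1}<h$ and $|X_{\nu(h)}|\le r_n$, while on $\{\nu(h)>m,\,T>m\}$ the definition of $\nu(h)$ forces $Z_m<h$ directly. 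This converts $(\widehat Z/B)^2$ to $(r_n+h)^2$ in the correction term and produces \eqref{d15}.

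The main obstacle is the bookkeeping in \eqref{d15}: one must simultaneously produce a sharp lower bound on $\E[\widehat Z/B]$ and a matching upper bound on $\E[\widehat Z^3/(6B^3)]$, all expressed in terms of $E_n$, without picking up an extra $1/B_{m,n}$ factor on the right-hand side. The resolution is to exploit $B_{\nu(h)\wedge m,n}\le 1$ on the linear side (where it provides a free favorable lower bound on $1/B$) and the pathwise bound $\widehat Z\le r_n+h$ on the cubic side (where it reduces the cubic to a linear quantity again controlled via Lemma~\ref{L5}).
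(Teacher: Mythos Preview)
Your approach is the same as the paper's and your arguments for \eqref{d13} and \eqref{d14} are correct. However, your treatment of the cubic correction in \eqref{d15} has a gap. You apply the Taylor bound $\Psi(x)\ge 2\ff(0)\,x(1-x^2/6)$ at $x=\widehat Z_{\nu(h)\wedge m}/B_{\nu(h)\wedge m,n}$ and then claim that the pathwise bound $\widehat Z_{\nu(h)\wedge m}\le r_n+h$ ``converts $(\widehat Z/B)^2$ to $(r_n+h)^2$''. It does not: since $B_{\nu(h)\wedge m,n}\le 1$, the quotient $\widehat Z/B$ can exceed $r_n+h$, and the best you get from $\widehat Z\le r_n+h$ and $B\ge B_{m,n}$ is $(\widehat Z/B)^2\le (r_n+h)^2/B_{m,n}^2$. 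That extra $1/B_{m,n}^2$ is exactly the unwanted factor you flagged in your last paragraph, and it prevents you from reaching \eqref{d15} as stated.

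The fix is to use the monotonicity of $\Psi$ \emph{before} the Taylor expansion rather than after. Since $B_{\nu(h)\wedge m,n}\le B_n=1$ and $\Psi$ is increasing, one has
\[
\Psi\Big(\frac{\widehat Z_{\nu(h)\wedge m}}{B_{\nu(h)\wedge m,n}}\Big)\ \ge\ \Psi\big(\widehat Z_{\nu(h)\wedge m}\big)\ \ge\ 2\ff(0)\,\widehat Z_{\nu(h)\wedge m}\Big(1-\frac{\widehat Z_{\nu(h)\wedge m}^{\,2}}{6}\Big)\ \ge\ 2\ff(0)\,\widehat Z_{\nu(h)\wedge m}\Big(1-\frac{(r_n+h)^2}{6}\Big),
\]
where the last step uses only $\widehat Z_{\nu(h)\wedge m}\le r_n+h$ with no $B$ left in the denominator. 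Taking expectations and inserting the lower bound $\E\widehat Z_{\nu(h)\wedge m}\ge E_n(1-2\kk g_n^*-\kk r_n)$ from \eqref{d4}--\eqref{d5}, together with $(1-a)(1-b)\ge 1-a-b$, gives \eqref{d15}. This is precisely the route the paper takes in \eqref{d19}--\eqref{d20}.
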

\begin{proof}
Using (\ref{a4}) with $y=\widehat Z_{\nu(h)\wedge m}$, we obtain the first inequality in (\ref{d13}) as a consequence of (\ref{d11}).
The second  inequality in (\ref{d13}) follows from (\ref{d3}).

 Next, it has been shown in \cite[p. 3328]{DSW16} that
\begin{gather}                                                                        \label{d16}
2\ff(0)a\ge\Psi(a)\ge 2\ff(0)a(1-a^2/6)\quad
\text{for all }a\ge0.
\end{gather}
Recall that $0\le z:=\widehat Z_{\nu(h)\wedge m}\le r_n+h$ and $B_n=1$. Hence, by (\ref{d16}),
\begin{gather}                                                                        \label{d18}
\Psi\Big(\frac z{B_{{\nu(h)\wedge m},n}}\Big)\le\Psi\Big(\frac z{B_{m,n}}\Big)
\le\frac{2\ff(0) z}{B_{m,n}},
%=\frac{B_n}{B_{m,n}}\frac{2\ff(0) z}{B_n},
\\                                                                         \label{d19}
\Psi\Big(\frac z{B_{{\nu(h)\wedge m},n}}\Big)\ge\Psi\Big(\frac z{B_n}\Big)
\ge\frac{2\ff(0) z}{B_n}\Big(1-\frac{ z^2}{6B_n^2}\Big)
\ge{2\ff(0) z}\Big(1-\frac{ (r_n+h)^2}{6}\Big).
\end{gather}
Taking mathematical expectations in (\ref{d18}) and (\ref{d19})
with $z=\widehat Z_{\nu(h)\wedge m}$, we obtain:
\begin{gather}                                                                        \label{d20}
\frac{2\ff(0) \E\widehat Z_{\nu(h)\wedge m}}{B_{m,n}}\ge
\E\Psi\Big(\frac {\widehat Z_{\nu(h)\wedge m}}{B_{{\nu(h)\wedge m},n}}\Big)
\ge{2\ff(0) \E\widehat Z_{\nu(h)\wedge m}}\Big(1-\frac{ (r_n+h)^2}{6}\Big).
\end{gather}

 Now~(\ref{d14}) and~(\ref{d15}) follow from~(\ref{d20}) 
together with~(\ref{d3}) and~(\ref{d4}).
\end{proof}

%\newpage
\begin{lemma}                                                                                       \label{L7}
Assume that $\rho\le1/64$. Then inequalities \eqref{i31} and \eqref{i32}
take place with some absolute constants $C_1$ and $C_2$.
\end{lemma}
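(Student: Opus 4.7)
The plan is to apply Lemma \ref{L2} with a careful choice of the parameters $h$ and $m$. The natural scale is dictated by the three error terms appearing there: $\kappa \rho$ from \eqref{d13}, the correction $(r_n + h)^2$ from \eqref{d15}, and the implicit factor $1/B_{m,n}$ in \eqref{d14}. With $\kappa = 2/h + 4/B_m$, the first term demands $h$ and $B_m$ of order at least $\rho^{1/3}$, while the third (via $1/B_{m,n} \le 1 + B_m^2$ when $B_m^2 \le 1/2$) demands $B_m^2$ at most of order $\rho^{2/3}$. So I would set $h := D \rho^{1/3}$ and take $m$ to be the smallest index in $\{1,\ldots,n\}$ with $B_m^2 \ge A \rho^{2/3}$, where $A, D$ are absolute constants to be fixed at the end (e.g.\ $A = 3$, $D = 1$).

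First I would verify admissibility under the assumption $\rho \le 1/64$: since then $\rho^{2/3} \le 1/16$, the choices above yield $B_m \ge \sqrt{A}\rho^{1/3} \ge 24\rho$ (the hypothesis of Lemma \ref{L5}) and $h \ge D\rho^{1/3} \ge 6\rho \ge 6 g_n^*$; minimality of $m$ gives $B_m^2 \le A\rho^{2/3} + r_n^2 \le (A+1)\rho^{2/3} \le 1/2$, so $B_{m,n} > 0$ and Lemma \ref{L2} applies. Next I would plug in the bounds: with $\kappa \le K_0 \rho^{-1/3}$, where $K_0 := 2/D + 4/\sqrt{A}$, all three quantities $\kappa \rho$, $\kappa g_n^*$, $\kappa r_n$ are $O(\rho^{2/3})$; similarly $(r_n+h)^2 \le (1+D)^2 \rho^{2/3}$ and $1/B_{m,n} - 1 \le (A+1)\rho^{2/3}$. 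Substituting into \eqref{d13}, \eqref{d14}, \eqref{d15} and using $2\varphi(0) = \sqrt{2/\pi}$ then gives \eqref{i31} and \eqref{i32} after a routine expansion of products like $(1 + O(\rho^{2/3}))(1 + O(\rho^{2/3}))$ and collection of constants into $C_1, C_2$.

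The main obstacle is not any single step but the arithmetic balance: $B_m$ must live on the single scale $\rho^{1/3}$, since any $B_m \ll \rho^{1/3}$ blows up $\kappa\rho$ and any $B_m \gg \rho^{1/3}$ blows up $B_m^2$ (hence the prefactor $1/B_{m,n}$) beyond $\rho^{2/3}$. The hypothesis $\rho \le 1/64$ is exactly what guarantees that this single scale is simultaneously consistent with the admissibility constraint $B_m \ge 24\rho$ of Lemma \ref{L5}, so that a universal choice of $A$ and $D$ works uniformly in $n$.
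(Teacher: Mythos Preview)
Your proposal is correct and follows essentially the same route as the paper: the paper also chooses $h=\rho^{1/3}$ and $m:=\min\{j\le n:B_j\ge\tfrac32\rho^{1/3}\}$ (your $D=1$, $A=9/4$), verifies the admissibility conditions of Lemmas~\ref{L5} and~\ref{L2} from $\rho\le 1/64$, and then bounds each of the errors $\kappa\rho$, $(r_n+h)^2$, and $1/B_{m,n}-1$ by absolute multiples of $\rho^{2/3}$ before substituting into \eqref{d13}--\eqref{d15}. The only cosmetic differences are that the paper works with a threshold on $B_j$ rather than $B_j^2$ and tracks explicit numerical constants throughout rather than leaving $A,D$ symbolic.
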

\begin{proof}
Set
 \begin{equation}                                                                           \label{d21}
m:=\min\{j\le n:B_j\ge\frac32\rho^{1/3}\}
\quad\text{and}\quad
h:=\rho^{1/3}.
\end{equation}
Noting that $r_n\le\rho\le\rho^{1/3}/4^2$ we obtain
%the first relation in \eqref{m28} is valid. Furthermore,
\begin{gather}                                                                               \label{d22}
B_m^2=B_{m-1}^2+\E X_m^2<\left(\frac32\rho^{1/3}\right)^2+r_n^2
\le\frac94\rho^{2/3}+\frac1{4^6}
<\frac17.
\end{gather}
Consequently, $B_{m,n}^2=1-B_m^2$ and
we have from  \eqref{d21} that 
 \begin{equation}                                                                           \label{d23}
B_{m,n}^2>\frac67,\quad 24\rho\le\frac{24}{4^2}\rho^{1/3}=\frac32\rho^{1/3}\le B_m,
\quad 6g_n<\frac{6}{4^2}\rho^{1/3}<\rho^{1/3}=h.
\end{equation}

Thus,  all assumptions of Lemmas~\ref{L5} and~\ref{L2} are satisfied. Hence,  Lemma~\ref{L2} implies that
\begin{gather}                                                                               \label{d24}
2\ff(0) E_n(1-\rho_1-\rho_2 -2\kk \rho)\le\mathbf{P}(T>n),
\\                                                                               \label{d25}
\mathbf{P}(T>n)\le2\ff(0) E_n(1+\rho_1)(1+2\kk \rho)(1+\rho_3),
\end{gather}
where we used that $2g_n^*+r_n\le2\rho$ and
\begin{gather}                                                                               \label{d26}
\rho_1:=1.3C_0\kk\rho,\quad\rho_2:=\frac{ (r_n+h)^2}{6},
\quad  \rho_3:=\frac1{B_{m,n}}-1.
\end{gather}
Now from \eqref{d5} and \eqref{d21} with $\rho^{1/3}\le1/4$ we have 
\begin{gather*}                                                                               \label{d26+}
\rho\kk=\frac{2\rho}h+\frac{4\rho}{B_m}
\le2\rho^{2/3}+\frac{4\rho^{2/3}}{3/2}<4.7\rho^{2/3},
\quad r_n+h\le\frac1{4^2}\rho^{1/3}+\rho^{1/3}.
\end{gather*}
Then, by \eqref{d22}, 
\begin{align*}
\frac1{B_{m,n}}=\frac{B_{m,n}}{B_{m,n}^2}=\frac{\sqrt{1-B_m^2}}{1-B_m^2}
\le\frac{1-B_m^2/2}{1-B_m^2}=1+\frac{B_m^2}{2B_{m,n}^2}
<1+1.4\rho^{2/3}.
\end{align*}
So, these calculations and \eqref{d26} yield
\begin{gather}                                                                               \label{d28}
\rho_1<5C_0\rho^{2/3},\quad\rho_2<0.2\rho^{2/3},
\quad  \rho_3<1.4\rho^{2/3},
\quad2\kk\rho<9.4\rho^{2/3}.
\end{gather}

Substituting \eqref{d28} into \eqref{d24} we obtain \eqref{i31} with any 
$C_1\ge 5C_0+9.6$. On the other hand
from  \eqref{d28} and \eqref{d25} we may obtain \eqref{i32} with a constant $C_2$
which may be calculated in the following way:
\begin{gather*}                                                                               \label{d25+}
C_2=\sup_{\rho^{1/3}\le1/4}\left[5C_0(1+2\kk \rho)(1+\rho_3)+9.4(1+\rho_3)+1.4\right]<\infty.
\end{gather*}
\end{proof}

Thus, when $\rho\le1/4^3$, the both assertions of Theorem~\ref{thm:main}
immediately follow from  Lemma \ref{L7}. But if $\rho>1/4^3$ then
 \eqref{i32} is valid with any $C_1\ge4^2=16$ because in this case right-hand side in \eqref{i32} is negative.

Let us turn to the upper bound~\eqref{i32}.
If $\rho\le\frac{1}{24}$ but $\rho>\frac{1}{64}$ then \eqref{i33} holds for $m=n$;
and as a result we have from \eqref{i33} with any $C_2\ge32/\ff(0)$ that 
$$
\Pr(T_n>n)\le 4E_n\le4^3E_n\rho^{2/3}\le2\ff(0)E_n(1+C_2\rho^{2/3})
\quad\text{for}\quad
\rho^{1/3}>1/4.
$$

So, we have proved all assertions of Theorem~\ref{thm:main} in all cases.
%%%%%%%%%%%%%%%%%%%%%%%%%%%%%%%%%%%%%%%%%%%%%%%%%%%%%%%%%%%%%%%%%%%%%%%%%%%%%%%%%%%%%%%%%%%%%%%%%%%%%%%%%%%%%%%%%%%%%%%%%%%%%%%%%%%%%%%%%%%%%%%%%%%%%%%%%%%%%%%%%%%%
\subsection{Proof of Corollary~\ref{cor:ex0}.}  
In order to apply Corollary~\ref{cor:asymp} we introduce the following triangular array:
\begin{gather}                                                                                  \label{ex21}
X_{j,n}:=\frac{u_{j,n}X_j}{\sigma_n},\quad g_{j,n}:=\frac{G_{j,n}}{\sigma_n},
\quad 1\le j\le n,\ n\ge1.
\end{gather}
The assumptions in \eqref{ex6} and \eqref{ex7} imply that the array introduced in \eqref{ex21} satisfies \eqref{i5} and \eqref{i6}. Thus,
\begin{align*}
\Pr\left(\tau_n>n\right)=\Pr(T_n>n)
&\sim\sqrt{\frac{2}{\pi}}\E[S_{n,n}-g_{n,n};T_n>n]\\
&=\sqrt{\frac{2}{\pi}}\Bigl(\E[S_{n,n};T_n>n]
-g_{n,n}\Pr(T_n>n)\Bigr).
\end{align*}
Here we also used \eqref{i20+}.
Since $g_{n,n}\to0$, we conclude that
$$
\Pr\left(\tau_n>n\right)
\sim \sqrt{\frac{2}{\pi}}\E[S_{n,n};T_n>n].
$$
Noting that $S_{n,n}=U_{n,n}/\sigma_n$, we get
\begin{equation}
\label{ex2.1}
\Pr\left(\tau_n>n\right)
\sim \sqrt{\frac{2}{\pi}}\frac{1}{\sigma_n}
\E[U_{n,n};\tau_n>n].
\end{equation}
By the optional stopping theorem, 
$$
\E[U_{n,n};\tau_n>n]=-\E[U_{\tau_n,n};\tau_n\le n].
$$
It follows from \eqref{ex5} that, for every fixed $k\ge1$,
\begin{equation}
\label{ex19}
U_{k,n}\to U_k\ \text{a.s.}
\end{equation}
and, taking into account the continuity of distribution functions,
\begin{align}
\label{ex20}
\nonumber
\Pr(\tau_n>k)
&=\Pr(U_{1,n}>G_{1,n},U_{2,n}>G_{2,n},\ldots,U_{k,n}>G_{k,n})\\
&\hspace{1cm}\to \Pr(U_{1}>g_1,U_{2}>g_2,\ldots,U_{k}>g_k)
=\Pr(\tau>k).
\end{align}
Obviously, \eqref{ex20} implies that
\begin{equation}
\label{ex29}
\Pr(\tau_n=k)\to \Pr(\tau=k)\quad
\text{for every }k\ge1.
\end{equation}

Furthermore, it follows from the assumptions \eqref{ex1} and \eqref{ex6} that 
\begin{equation}
\label{ex25}
|U_{\tau_n,n}|\le M\quad\text{on the event }\{\tau_n\le n\}.
\end{equation}

Then, combining \eqref{ex19}, \eqref{ex29} and \eqref{ex25}, we conclude that
\begin{equation}
\label{ex22}
\E[U_{\tau_n,n};\tau_n\le k]
=\sum_{j=1}^k\E[U_{j,n};\tau_n=j]
\to \sum_{j=1}^k\E[U_{j};\tau=j]
=\E[U_\tau;\tau\le k].
\end{equation}
Note also that, by \eqref{ex25} and \eqref{ex20},
\begin{equation*}
\limsup_{n\to\infty}|\E[U_{\tau_n,n};k<\tau_n\le n]|
\le M\limsup_{n\to\infty}\Pr(\tau_n>k).
\end{equation*}
Therefore,
\begin{align}
\label{ex23}
\nonumber
\limsup_{n\to\infty} \E[U_{\tau_n,n};\tau_n\le n]
&\le \limsup_{n\to\infty} \E[U_{\tau_n,n};\tau_n\le k]
+\limsup_{n\to\infty}|\E[U_{\tau_n,n};k<\tau_n\le n]|\\
&=\E[U_\tau;\tau\le k]+M\Pr(\tau>k)
\end{align}
and
\begin{align}
\label{ex24}
\nonumber
\liminf_{n\to\infty} \E[U_{\tau_n,n};\tau_n\le n]
&\ge \liminf_{n\to\infty} \E[U_{\tau_n,n};\tau_n\le k]
-\limsup_{n\to\infty}|\E[U_{\tau_n,n};k<\tau_n\le n]|\\
&=\E[U_\tau;\tau\le k]-M\Pr(\tau>k).
\end{align}
Letting $k\to\infty$ in \eqref{ex23} and \eqref{ex24}, and noting that $\tau$ is almost surely finite, we infer that
$$
\E[U_{\tau_n,n};\tau_n\le n]\to \E[U_\tau].
$$
Consequently, by the optional stopping theorem, 
$$
\E[U_{\tau_n,n};\tau_n>n]=-\E[U_{\tau_n,n};\tau_n\le n]\to \E[-U_\tau].
$$
Plugging this into \eqref{ex2.1}, we obtain
the desired result.

%%%%%%%%%%%%%%%%%%%%%%%%%%%%%%%%%%%%%%%%%%%%%%%%%%%%%%%%%%%%%%%%%%%%%%%%%%%%%%%%%%%%%%%%%%%%%%%%%%%%%%%%%%%%%%%%%%%%%%%%%%%%%%%%%%%%%%%%%%%%%%%%%%%%%%%%%%%%%%%%%%%%
\subsection{Calculations related to Example~\ref{Lind2}}
\begin{lemma}                                                                                       \label{Ex2}
	For the simple symmetric random walk $\{U_m\}$ one has 
$$
\Pr\left(N+\uu U_m>0\right)=\Pr(-N<U_m\le N)
\quad\text{for all}\quad m,N\ge1
$$
and
$$
\sup_{N\ge1}\left|\frac{\Pr(-N<U_{n}\le N)}{\Psi(N/\sqrt{n})}-1\right|\to0.
$$
\end{lemma}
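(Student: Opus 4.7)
The first identity is the classical reflection-principle identity for the simple symmetric walk. Reflecting any path with $\uu U_m\le -N$ around its first hitting time of level $-N$ gives, for each integer $j>-N$, the bijection $\Pr(\uu U_m\le -N,U_m=j)=\Pr(U_m=-2N-j)$; summing over $j$ yields $\Pr(\uu U_m\le -N)=\Pr(U_m\le -N)+\Pr(U_m\le -N-1)$. By the symmetry $U_m\stackrel{d}{=} -U_m$ this equals $\Pr(U_m\ge N)+\Pr(U_m\ge N+1)$, and a short rearrangement (using integer-valuedness and $\Pr(U_m=N)=\Pr(U_m=-N)$) shows both $\Pr(\uu U_m>-N)$ and $\Pr(-N<U_m\le N)$ coincide with $\Pr(U_m<N)-\Pr(U_m>N)$.

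For the uniform convergence claim, the key tool is the multiplicative local CLT for the simple symmetric walk,
\[
\Pr(U_n=k)=\tfrac{2}{\sqrt{2\pi n}}\,e^{-k^2/(2n)}(1+\ee_{n,A}(k)),\qquad
\sup_{k\equiv n\,(\mathrm{mod}\,2),\ |k|\le A\sqrt n}|\ee_{n,A}(k)|\to0,
\]
obtained directly by applying Stirling's formula to $\Pr(U_n=k)=\binom{n}{(n+k)/2}2^{-n}$. Equipped with this, I would split into regimes in $N$. For $N\ge A\sqrt n$ with $A$ large enough that $\Psi(A)$ is bounded away from $0$, Berry--Esseen gives $|\Pr(-N<U_n\le N)-\Psi(N/\sqrt n)|\le C/\sqrt n$, which divided by $\Psi(N/\sqrt n)\ge \Psi(A)$ yields a uniformly $o(1)$ relative error. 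For $1\le N\le A\sqrt n$ I would insert the multiplicative LLT into the representation $\Pr(-N<U_n\le N)=\sum_{k}\Pr(U_n=k)$ (with $k\in(-N,N]$ of the correct parity), factor out the uniform $1+o(1)$, and identify the remaining deterministic sum as a step-$(2/\sqrt n)$ Riemann approximation to $\int_{-N/\sqrt n}^{N/\sqrt n}\ff(u)\,du=\Psi(N/\sqrt n)$. For $N/\sqrt n$ in a compact interval $[\delta,A]$ the Riemann sum converges uniformly to the integral by smoothness of $\ff$; in the subregime $N/\sqrt n\to0$ both the sum and $\Psi(N/\sqrt n)$ are asymptotic to $N\sqrt{2/(\pi n)}$, since each term equals $\sqrt{2/(\pi n)}(1+o(1))$ (Taylor of $e^{-k^2/(2n)}$ for $|k|\le N=o(\sqrt n)$), the number of admissible $k$ is $N+O(1)$, and $\Psi(x)=2\ff(0)x(1+o(1))$ as $x\to0$.

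The main technical hurdle is precisely the smallest regime $N=O(1)$: there $\Psi(N/\sqrt n)$ is only of order $1/\sqrt n$, the same order as the Berry--Esseen absolute error, so a direct CDF comparison cannot produce a vanishing \emph{relative} error. This is why the multiplicative form of the LLT is essential, and why the leading constants must be identified term-by-term rather than by one global CDF bound. Once the three regimes are glued together (choosing $\delta\to 0$ slowly, then $A$ sufficiently large), one obtains the required $\sup_{N\ge1}$ convergence.
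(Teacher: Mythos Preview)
Your reflection-principle derivation of the first identity is correct and essentially the same as the paper's.

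For the second statement your overall strategy (Berry--Esseen/CLT for large $N$, local CLT for small $N$) coincides with the paper's, but the organisation differs.  The paper makes a single cut at the \emph{data-dependent} level $\varepsilon_n\sqrt{n}$, where $\varepsilon_n^2:=\sup_{x>0}|\Pr(-x\sqrt{n}<U_n\le x\sqrt{n})-\Psi(x)|\to0$: for $N\ge\varepsilon_n\sqrt{n}$ the relative error is at most $\varepsilon_n^2/\Psi(\varepsilon_n)\sim\varepsilon_n/(2\ff(0))\to0$, and for $N\le\varepsilon_n\sqrt{n}$ the additive local CLT together with $\ff(k/\sqrt{n-1})\to\ff(0)$ (uniformly for $|k|\le\varepsilon_n\sqrt{n}$) handles everything at once.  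This avoids your separate Riemann-sum regime on $[\delta,A]$ and the gluing of $\delta\to0$ and $A$; your three-regime decomposition is workable but heavier.

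There is, however, one genuine gap in your small-$N$ argument.  You write that ``the number of admissible $k$ is $N+O(1)$''.  That is not sharp enough when $N$ stays bounded: since each local-CLT term equals $\sqrt{2/(\pi n)}(1+o(1))$ and $\Psi(N/\sqrt{n})\sim N\sqrt{2/(\pi n)}$, a count of $N+c$ with a fixed $c\ne0$ would make the ratio tend to $1+c/N$, not $1$, for fixed $N$.  The crucial observation---made explicitly in the paper---is that the half-open interval $(-N,N]$ contains \emph{exactly} $N$ even and \emph{exactly} $N$ odd integers, so the count of lattice points of the correct parity is \emph{exactly} $N$ for every $n$ and every $N\ge1$.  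With this exact count your multiplicative-LLT argument goes through uniformly down to $N=1$.

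A minor slip: in this paper $U_n=X_2+\dots+X_n$ is a sum of $n-1$ Rademacher variables, so your Stirling formula should carry $n-1$ rather than $n$; this is irrelevant for the asymptotics.
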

\begin{proof}
By the reflection principle for symmetric simple random walks, 
\begin{equation*}
%\label{eN.3}
\Pr\left(N+U_{m}=k, N+\uu U_{m}\le 0\right)=
\Pr(U_m=N+k)\quad\text{for every }k\ge1.
\end{equation*}
Thus, by the symmetry of the random walk $U_m$,
$$
\Pr\left(N+U_{m}>0, N+\uu U_{m}\le 0\right)
=\Pr(U_m<-N)=\Pr(U_m>N).
$$
Therefore,
\begin{align*}
\Pr\left(N+\uu U_{m}>0\right)
&=\Pr\left(N+U_{m}>0\right)-\Pr\left(N+U_{m}>0, N+\uu U_{m}\le 0\right)\\
&=\Pr(U_m>-N)-\Pr(U_m>N)
=\Pr(-N< U_{m}\le N).
\end{align*}

We now prove the second statement. Recall that $U_n$ is the sum of $n-1$ independent, Rademacher distributed random variables. By the central limit theorem, $U_{n}/\sqrt{n-1}$ converges to the standard normal distribution.
Therefore, $U_{n}/\sqrt{n}$ has the same limit. This means that
$$
\varepsilon_n^2:=\sup_{x>0}|\Pr(-x\sqrt{n}<U_n\le x\sqrt{n})-\Psi(x)|
\to0.
$$
Taking into account that $\Psi(x)$ increases, we conclude that, for every $\delta>0$,
$$
\sup_{x\ge\delta}\left|\frac{\Pr(-x\sqrt{n}<U_n\le x\sqrt{n})}{\Psi(x)}
-1\right|\le\frac{\varepsilon_n^2}{\Psi(\delta)}.
$$
Choose here $\delta=\varepsilon_n$. Noting that 
$\Psi(\varepsilon_n)\sim 2\varphi(0)\varepsilon_n$, we obtain
$$
\sup_{N\ge\varepsilon_n\sqrt{n}}\left|\frac{\Pr(-N<U_n\le N)}
{\Psi(N/\sqrt{n})}-1\right|
\le\frac{\varepsilon_n^2}{\Psi(\varepsilon_n)}
\sim\frac{\varepsilon_n}{2\varphi(0)}\to0.
$$

It remains to consider the case $N\le\varepsilon_n\sqrt{n}$. Here we shall use the local central limit theorem. Since $U_n$ is $2$-periodic,
$$
\sup_{k:\ k\equiv n-1({\rm mod}2)}
|\sqrt{n-1}\Pr(U_n=k)-2\varphi(k/\sqrt{n-1})|\to0.
$$
Noting that 
$$
\sup_{k\le \varepsilon_n\sqrt{n}}
|\varphi(k/\sqrt{n-1})-\varphi(0)|\to0,
$$
we obtain
$$
\sup_{N\le \varepsilon_n\sqrt{n}}
\left|\frac{\sqrt{n-1}\Pr(-N<U_n\le N)}{2\varphi(0)m(n,N)}-1\right|\to0,
$$
where
$$
m(n,N)=\#\{k\in(-N,N]:\ k\equiv n-1({\rm mod}2)\}.
$$
Since the interval $(-N,N]$ contains $N$ even and $N$ odd lattice points,
$m(n,N)=N$ for all $n$, $N\ge 1$. Consequently,
$$
\sup_{N\le \varepsilon_n\sqrt{n}}
\left|\frac{\sqrt{n-1}\Pr(-N<U_n\le N)}{2\varphi(0)N}-1\right|\to0,
$$
It remains now to notice that
$$
\Psi(N/\sqrt{n})\sim \frac{2\varphi(0)N}{\sqrt{n}}
$$
uniformly in $N\le \varepsilon_n\sqrt{n}$.
\end{proof}

%\newpage

%%%%%%%%%%%%%%%%%%%%%%%%%%%%%%%%%%%%%%%%%%%%%%%%%%%%%%%%%%%%%%%%%%%%%%%%%%%%%%%%%%%%%%%%%%%%%%%%%%%%%%%%%%
\end{document}